\documentclass{amsart}

\usepackage{amsmath}
\usepackage{amsfonts}
\usepackage{amssymb}
\usepackage{amsthm}

\usepackage{epsfig,epstopdf}
\usepackage[dvipsnames,usenames]{color}
\usepackage{hyperref}
\usepackage{xifthen}
\usepackage{graphicx}
\usepackage{tikz}
\tikzstyle{vertex}=[circle, draw, inner sep=0pt, minimum size=6pt]

\usepackage[
top    = 1in,
bottom = 1in,
left   = 1in,
right  = 1in]{geometry}
\usepackage{graphicx}

\tikzset{
  treenode/.style = {align=center, inner sep=0pt, text centered,
    font=\sffamily},
  arn_n/.style = {treenode, circle, black, text width=1em},
  arn_r/.style = {treenode, circle, blue, draw=blue, 
    text width=1em, very thick}}

\theoremstyle{plain}
\newtheorem{theorem}{Theorem}[section]
\newtheorem{lemma}[theorem]{Lemma}
\newtheorem{corollary}[theorem]{Corollary}
\newtheorem{prop}[theorem]{Proposition}

\theoremstyle{definition}

\newtheorem{defn}[theorem]{Definition}
\newtheorem{ex}[theorem]{Example}

\newcommand{\C}{\mathbb{C}}

\newcommand{\N}{\mathbb{N}}
\newcommand{\R}{\mathbb{R}}

\newcommand{\ds}{\displaystyle}

\newcommand{\dist}{\operatorname{d}}

\renewcommand{\mid}{\, : \, }

\newcommand\Lhat{\Hat{\mathcal{L}}} 
 
\newcommand\Lcerohat{\Hat{\mathcal{L}_{0}}} 
\newcommand\LAcerohat{\Hat{\mathcal{L}^{a}_{0}}} 
\newcommand\supfvw{\sup\limits_{v \sim w }{ \lvert f(v)-f(w)} \rvert  }

\newcommand\maxfNv{\max\limits_{w \in N_v }{ \lvert f(v)-f(w)} \rvert  }
\newcommand\supfL{\sup\limits_{f \in \Lhat }\{|f(v)| : f(a)=0, \fnorm \leq 1 \}}
\newcommand\supfLcero{\sup\limits_{f \in \Lcerohat }\{|f(v)| : f(a)=0, \fnorm \leq 1 \}}

\newcommand\maxgNv{\max\limits_{w \in N_v }{ \lvert g(v)-g(w)} \rvert  }
\newcommand\supgvw{\sup\limits_{v \sim w }{ \lvert g(v)-g(w)} \rvert  }

\newcommand\suppsifvw{\sup\limits_{v \sim w} |\psi(v)f(v)-\psi(w)f(w)|}
\newcommand\blanknorm{\| \cdot \|_{\Lhat}^{a}}
\newcommand\fnorm{\| f \|_{\Lhat}^{a}}
\newcommand\gnorm{\| g \|_{\Lhat}^{a}}
\newcommand\fplusgnorm{\| f+g \|_{\Lhat}^{a}}
\newcommand\fnormdef{|f(a)|+\supfvw}

\newcommand\sigmapsidef{\sup\limits_{v\sim w}\dist(a,v)|\psi(v)-\psi(w)|}
\newcommand\psinorm{\|\psi\|_{\Lhat}^{a}}
\newcommand\psisup{\|\psi\|_{\infty}}

\begin{document}

\title{Multiplication Operators on the Lipschitz Space of an Infinite Graph}
\author{Jos\'e A. Issa-Barbar\'a}
\address{Washington University in St. Louis (WashU), MO, USA}
\email{i.jose@wustl.edu}
\author{Rub\'en A. Mart\'inez-Avenda\~no}
\address{Instituto Tecnol\'ogico Aut\'nomo de M\'exico (ITAM), Mexico City, Mexico}
\email{ruben.martinez.avendano@gmail.com}

\keywords{Lipschitz space of an infinite graph, multiplication operators}
\subjclass[2020]{47B37, 47B38, 47B01, 05C63}

\thanks{The second author is partially supported by  the Asociaci\'on Mexicana de Cultura A.C}

\begin{abstract}
    The Lipschitz space of an infinite (locally-finite) graph is defined as the set of functions on the vertices of the graph such that the differences of the values between adjacent vertices remain bounded. In this paper we prove that this set is a Banach space when endowed with its natural norm, and we define the little Lipschitz space as the subspace where these differences tend to zero. We consider the multiplication operators on these spaces and characterize their boundedness, compactness and the spectra. We also obtain estimates of the norm and essential norm, and we characterize when these operators are isometric.
\end{abstract}

\maketitle

\section{Introduction}

The study of spaces of functions on infinite trees was initiated more than 50 years ago, probably by Cartier in the papers \cite{Cartier1,Cartier2}; his motivation was to study harmonic functions on the unit disk and see what information the functions on the tree gave on the harmonic functions on the unit disk. Nevertheless, the first study of these spaces as Banach spaces is more recent, and was given a great impulse by several papers of Cohen and Colonna (see, for example, \cite{CoCo1,CoCo2}). Furthermore, in the seminal paper \cite{CoEa}, Colonna and Easley introduced the Lipschitz space of a tree as the set of complex-valued functions defined on the vertices of the tree such that they are Lipschitz when the tree is given the metric induced by the distance between vertices. In their paper, they showed that this is in fact a Banach space when endowed with a natural norm. They also introduced the little Lipschitz space of a tree as the subspace of functions such that the differences of the values of the functions in adjacent vertices vanishes far from the root (we will give the precise definition in Section 2) and they show that this is a closed subspace of the Lipschitz space.

The study of multiplication operators on Banach spaces of functions is perhaps almost a century old and we will not try to give a comprehensive account here. Nevertheless, we should mention that this class of operators is the one that is usually first studied when one encounters a Banach space of functions and this class of operators usually offers many examples and counterexamples to theorems and conjectures one has on these spaces. Naturally, Colonna and Easley studied this type of operators in their paper. In fact, they characterize the bounded multiplication operators on these spaces, the compact multiplication on these spaces, they obtain estimates for the norms and essential norms, they find the spectra of these operators and they completely characterize isometric multiplication operators. We refer the reader to the Colonna and Easley paper \cite{CoEa} for an explanation of their motivation for the study of theses spaces and the multiplication operators on them, as well as a historical overview of the study of multiplication and composition operators (see also \cite{AlCoEa}).

In the present paper, we generalize the results of the paper \cite{CoEa} to infinite and locally-finite connected graphs. Although most of the results proven in \cite{CoEa} carry to the setting of general graphs, there is an added difficulty here since, once we fix a vertex to serve as the ``root'' of the graph, a vertex may have more than one ancestor, and furthermore, there may be other neighbors with the same distance to the ``root''. We offer a definition which we believe is appropriate and which allows us to prove essentially all of the results of \cite{CoEa}; in most of them essentially the same proofs work, once we adapt them to our definition, but in a few cases there were nontrivial difficulties. We have chosen to include most of the proofs here to make the paper self-contained.

After we had completed this work, Prof.~Flavia Colonna kindly pointed out to us the unpublished paper \cite{CoLo} (partly based on Rachel Locke's Ph.D.~thesis \cite{Locke}) in which they also study the Lipschitz space of a graph, but use a different approach for some of the proofs. We invite the reader to compare the results to gain a deeper understanding of the subject.

The organization of our paper is as follows. In Section 2 we give the basic definitions and we prove the basic facts about the Lipschitz space. In Section 3 we define the little Lipschitz space and we prove its basic properties. In Section 4 we study the weak and strong convergence of functions on these spaces. In Section 5, we characterize the boundedness of the multiplication operators on both the Lipschitz and little Lipschitz spaces and we offer some estimates for the norms. In Section 6 we characterize the spectra of the multiplication operators on these spaces and in Section 7 we characterize the compactness of these operators. Later, in Section 8, we give estimates for the essential norm of these operators. Lastly, in Section 9 we show that multiplication operators are isometries only in the most trivial of cases.

\section{The Lipschitz Space of an Infinite Graph}

For the purposes of this paper, a {\em graph} $G=(V,E)$ will always be a connected undirected simple graph, the set of vertices will be infinite, and every vertex will have finite degree ({\it i.e.}, $G$ is locally-finite). Observe that this implies that the set of vertices is countably infinite. We use the notation $v \sim w$ to denote that two vertices $v$ and $w$ are adjacent. For ease of notation, we will use the letter $G$ to denote the set of vertices of the graph, instead of $V$. We denote by $\dist(v,w)$ the (natural) distance between the vertices $v$ and $w$ in $G$.

Let us start by defining the space of functions in which we will be working throughout this paper. 

\begin{defn} 
Let $G=(V,E)$ be a graph. Then 
\begin{equation*}
\Lhat:=\{f:G \to \C  \mid   \sup\limits_{v \sim w } { |f(v)-f(w)}|  <\infty\}
\end{equation*}
where we write
\begin{equation*}
\supfvw:=\sup\{ |f(v)-f(w)| : v,w\in G \text{ and } v \sim w \},
\end{equation*}
for short.
\end{defn}

The space $\Lhat$ is, in fact, the space of complex-valued Lipschitz functions on $G$. Indeed, observe that for vertices $x\neq y \in G$, if $x=v_0 \sim v_1 \sim v_2 \sim \cdots \sim v_{n-1} \sim v_n =y$ is a path of length $n=\dist(x,y)$ between  $x$ and $y$, we have
\[
    \frac{|f(x)-f(y)|}{\dist(x,y)} 
    \leq \frac{1}{n} \sum_{j=0}^{n-1} |f(v_j) - f(v_{j+1})| \leq \supfvw,
\]
and hence
\[
\sup_{v\neq w} \frac{|f(v)-f(w)|}{\dist(v,w)} \leq \supfvw.
\]
It is obvious that for $x \sim y \in G$
\[
|f(x) -f(y) | = \frac{|f(x) -f(y) |}{\dist(x,y)} \leq \sup_{v\neq w} \frac{|f(v)-f(w)|}{\dist(v,w)},
\]
and hence
\[
\sup_{v\neq w} \frac{|f(v)-f(w)|}{\dist(v,w)} = \supfvw,
\]
showing that $\Lhat$ is indeed the space of Lipschitz functions on $G$.

As it was the case in \cite{CoEa}, the functions in $\Lhat$ are not necessarily bounded. Indeed, for a fixed vertex $a$, the function $f:G\to \C$ defined by $f(v)=\dist(a,v)$ for all $v \in G$ is not bounded, but it belongs to $\Lhat$. Nevertheless, notice that if $f:G\to \C$ is a bounded function then  $f\in \Lhat$.

It is straightforward to check that $\Lhat$ is a vector space, since clearly
\begin{equation*}
\sup\limits_{v \sim w } |(f+\lambda g)(v) - (f+\lambda g)(w)| \leq  \sup\limits_{v \sim w }|f(v)-f(w)| + |\lambda| \sup\limits_{v \sim w } | g(v)-g(w)|,
\end{equation*}
for $f, g \in \Lhat$ and $\lambda \in \C$.

Now, we define a norm in $\Lhat.$

\begin{defn}
Let $a \in G$ be fixed and $\blanknorm :\Lhat \to \R$ be the function defined for all $f \in \Lhat$ as
\begin{equation*}
    \fnorm := \fnormdef.
\end{equation*}
\end{defn}
Notice that this function depends on $a$, which is why we will be using the superscript $a$. Next, we will prove $\blanknorm $ is, in fact, a norm in $\Lhat$.
% We prove the defined norm is actually a norm
\begin{prop}
$\blanknorm$ is a norm in $\Lhat$ for any $a \in G$.
\end{prop}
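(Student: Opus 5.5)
We verify the norm axioms directly from the definition $\fnorm = \fnormdef$. First, $\fnorm$ is a finite nonnegative real number for every $f \in \Lhat$, since $|f(a)|<\infty$ and the supremum is finite by the definition of $\Lhat$. Absolute homogeneity is immediate: for $\lambda\in\C$ we have $|\lambda f(a)| = |\lambda||f(a)|$ and $\sup_{v\sim w}|\lambda f(v)-\lambda f(w)| = |\lambda|\supfvw$.

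For the triangle inequality we use the estimate already displayed in the paper (with $\lambda=1$):
\[
\sup_{v\sim w}|(f+g)(v)-(f+g)(w)| \le \supfvw + \supgvw,
\]
together with $|f(a)+g(a)|\le |f(a)|+|g(a)|$. Adding the two inequalities gives $\fplusgnorm \le \fnorm + \gnorm$.

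The only step requiring the structure of $G$ is definiteness, and it is the main (though mild) point. Suppose $\fnorm = 0$. Then $f(a)=0$ and $|f(v)-f(w)|=0$ whenever $v\sim w$. Let $x\in G$ be arbitrary. Because $G$ is connected, there is a path $a=v_0\sim v_1\sim\cdots\sim v_n=x$. Since $f(v_j)=f(v_{j+1})$ for each $j$, inducting along the path gives $f(x)=f(a)=0$. Alternatively, this follows from the Lipschitz bound shown earlier: $|f(x)-f(a)|\le \dist(a,x)\,\supfvw = 0$. Hence $f\equiv 0$. This holds for any choice of the base vertex $a$, since connectedness guarantees that every vertex is reachable from $a$.
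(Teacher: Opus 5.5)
Your proof is correct and follows essentially the same route as the paper: a direct check of the norm axioms, with definiteness coming from $f(a)=0$ and $f$ being constant across every edge. You make the connectedness step explicit via a path from $a$ (or the Lipschitz bound), where the paper simply says ``it is clear,'' and you treat homogeneity and the triangle inequality as the paper does.
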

\begin{proof}
Let $f \in \Lhat$. Clearly, if $f=0$, then $\fnorm=0$. Conversely, if $\fnorm = 0$ we then have
\begin{equation*}
    |f(a)|=0 \quad \text{ and }\quad \supfvw = 0
\end{equation*}
which implies $f(a)=0$ and $f(v)=f(w)$ for all $v \sim w$. But it is clear that this implies that $f(v)=0$ for all $v \in G$. It is straightforward to show that $\|\lambda f \|_{\Lhat}^{a} =|\lambda|\fnorm$, and that, $\fplusgnorm \leq \fnorm + \gnorm$ for every $f, g \in \Lhat$. Hence $\blanknorm$ is a norm in $\Lhat$, for any vertex $a$. 
\end{proof}

Now, as it was mentioned before, the value of the norm depends on the chosen vertex $a$ of $G$. So it is of interest to check if for any two vertices $a,b$ in $G$ the norms $\blanknorm$ and $\| \cdot \|_{\Lhat}^{b} $ are equivalent in $\Lhat$.

\begin{prop}\label{norm equivalence}
The norms $\blanknorm$ and $\| \cdot \|_{\Lhat}^{b} $ are equivalent in $\Lhat$ for any $a,b \in G$.
\end{prop}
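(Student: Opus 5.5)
The plan is to show directly that there is a constant $C$, depending only on $\dist(a,b)$, with $\|f\|_{\Lhat}^{b} \leq C\,\fnorm$ for every $f \in \Lhat$. Since the roles of $a$ and $b$ are symmetric, the reverse inequality follows with the same constant, and this gives the equivalence.

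The key observation is that the seminorm part $\supfvw$ does not depend on the base vertex. So the only thing to control is $|f(b)|$ in terms of $|f(a)|$ and $\supfvw$. Using the Lipschitz estimate proved just before the definition of the norm, namely $|f(x)-f(y)| \leq \dist(x,y)\,\supfvw$ (obtained by summing along a geodesic path), we get
\[
|f(b)| \leq |f(a)| + |f(b)-f(a)| \leq |f(a)| + \dist(a,b)\,\supfvw .
\]
This is finite because $G$ is connected.

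Adding $\supfvw$ to both sides then gives
\[
\|f\|_{\Lhat}^{b} \leq |f(a)| + (1+\dist(a,b))\supfvw \leq (1+\dist(a,b))\,\fnorm .
\]
Interchanging $a$ and $b$ yields $\fnorm \leq (1+\dist(a,b))\,\|f\|_{\Lhat}^{b}$, which completes the argument.

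There is no real obstacle here. The only points to check are that $\dist(a,b)$ is finite, which uses connectedness, and that the path estimate from earlier in the section applies when $a=b$ or $a\sim b$; both are immediate.
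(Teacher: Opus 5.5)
Your proof is correct and is essentially the paper's argument. The paper also telescopes along a geodesic from $a$ to $b$ to get $|f(a)| \leq \dist(a,b)\sup_{v\sim w}|f(v)-f(w)| + |f(b)|$, adds the seminorm to both sides to obtain the constant $\dist(a,b)+1$, and concludes by symmetry; it further notes this constant is sharp, using $f=\dist(a,\cdot)$.
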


\begin{proof}
We want to show there exist $M,N >0$ such that for all $f \in \Lhat$,
\begin{equation*}
    N\fnorm\leq \| f \|_{\Lhat}^{b}\leq M\fnorm.
\end{equation*}
First we will show the first inequality. Let $f \in \Lhat$, observe that
\begin{equation*}
|f(a)|=|f(a)-f(a_1)+f(a_1)- \dots -f(a_{n-1})+f(a_{n-1})-f(b)+f(b)|
\end{equation*}
 where $a=a_0\sim a_1 \sim \dots \sim a_{n-1}  \sim a_n=b$ forms a path from $a$ to $b$ and $\dist(a,b)=n$. Now by the triangle inequality we can see
\begin{equation*}
\openup 2\jot
    \begin{split}
    |f(a)|&\leq|f(a)-f(a_1)|+|f(a_1)-f(a_2)|+\dots+|f(a_{n-1})-f(b)| +|f(b)|\\
    &\leq n\supfvw +|f(b)|.
    \end{split}
\end{equation*}
Then, by adding $\supfvw$ to both sides of the inequality we get
\begin{equation*}
\openup 2\jot
    \begin{split}
    \fnorm &\leq (n+1)\supfvw + |f(b)|\\
    &\leq (n+1)\supfvw + (n+1)|f(b)|\\
    &=(n+1)\| f \|_{\Lhat}^{b}.
    \end{split}
\end{equation*}
Thus, we can see that the following inequality holds for all $f \in \Lhat$
\begin{equation*}
    \frac{1}{n+1}\fnorm\leq \| f \|_{\Lhat}^{b}.
\end{equation*}
Now, since $a,b$ where chosen arbitrarily and $\dist(a,b)=\dist(b,a)$, we can also conclude that for all $f \in \Lhat$,
\begin{equation*}
    \| f \|_{\Lhat}^{b}\leq (n+1)\fnorm.
\end{equation*}
Therefore there exists $n\in \N$ such that for all $f \in \Lhat$,
\begin{equation}
    \frac{1}{n+1}\fnorm \leq \| f \|_{\Lhat}^{b}
    \leq (n+1)\fnorm. \label{eq norm eq}
\end{equation}
Hence, for any $a,b \in G$ the norms $\blanknorm, \| \cdot \|_{\Lhat}^{b}$ are equivalent in $\Lhat$.
\end{proof}

To see the previous inequalities are sharp, take $a,b$ two vertices in $G$ such that $\dist(a,b)=n$. Observe that for the function $f\in \Lhat$ defined by $f(v)=\dist(a,v)$, we have 
\begin{equation*}
    \fnorm = \dist(a,a)+\sup\limits_{v\sim w}|\dist(a,v)-\dist(a,w)| = 1.
\end{equation*}
Also notice that
\begin{equation*}
    \|f\|_{\Lhat}^{b} = \dist(a,b)+\sup\limits_{v\sim w}|\dist(a,v)-\dist(a,w)| = n+1.
\end{equation*}
Hence it follows that
\begin{equation*}
    \|f\|_{\Lhat}^{b} = (n+1)\fnorm.
\end{equation*}
On the other hand, to see the inequality on the left of \eqref{eq norm eq} is sharp, take $f(v)=\dist(b,v)$.

We have established that $\Lhat$ is a normed vector space, and that all the norms we defined are equivalent. It remains to show the
completeness of $(\Lhat,\blanknorm)$, which does not depend on the choice of the vertex $a$ since we have shown the equivalence of the norms.

\begin{prop}
The space $\Lhat$ is complete with the norm $\blanknorm$.
\end{prop}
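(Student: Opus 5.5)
We follow the standard Cauchy-sequence argument, working with the fixed vertex $a$; by Proposition~\ref{norm equivalence} completeness does not depend on this choice. Let $(f_n)$ be a Cauchy sequence in $(\Lhat,\blanknorm)$. The first step is to show that $(f_n)$ is Cauchy pointwise. Fix $v\in G$ and a path $a=v_0\sim v_1\sim\cdots\sim v_k=v$ with $k=\dist(a,v)$. Telescoping along this path, as in the proof of Proposition~\ref{norm equivalence}, gives
\[
|f_n(v)-f_m(v)|\le |f_n(a)-f_m(a)| + k\sup_{x\sim y}|(f_n-f_m)(x)-(f_n-f_m)(y)| \le (\dist(a,v)+1)\,\|f_n-f_m\|_{\Lhat}^{a}.
\]
Hence $(f_n(v))$ is Cauchy in $\C$ for every $v$, and we define $f(v):=\lim_n f_n(v)$.

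The second step is to show that $f\in\Lhat$ and $f_n\to f$ in norm. Given $\varepsilon>0$, choose $N$ with $\|f_n-f_m\|_{\Lhat}^{a}<\varepsilon$ for $n,m\ge N$. Then for each fixed pair $v\sim w$ and all $n,m\ge N$,
\[
|f_n(a)-f_m(a)|+|(f_n-f_m)(v)-(f_n-f_m)(w)|<\varepsilon.
\]
Letting $m\to\infty$ with $v,w$ fixed (only finitely many points are involved, so pointwise convergence suffices), we get
\[
|f_n(a)-f(a)|+|(f_n-f)(v)-(f_n-f)(w)|\le\varepsilon.
\]
Taking the supremum over $v\sim w$ needs a little care: since $|(f_n-f)(a)|\le\varepsilon$ and $\sup_{v\sim w}|(f_n-f)(v)-(f_n-f)(w)|\le\varepsilon$ each hold separately, we obtain $\|f_n-f\|_{\Lhat}^{a}\le 2\varepsilon$ for $n\ge N$. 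In particular $f_N-f\in\Lhat$, so $f=f_N-(f_N-f)\in\Lhat$ because $\Lhat$ is a vector space. It follows that $f_n\to f$ in $\blanknorm$.

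The only delicate point is that pointwise convergence does not give uniform control over the infinitely many edges. This is handled by passing to the limit in $m$ edge by edge, for fixed $v\sim w$, before taking the supremum. No other obstacle is expected.
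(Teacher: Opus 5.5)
Your proof is correct and follows essentially the same route as the paper: pointwise Cauchy-ness from the path estimate $|f_n(v)-f_m(v)|\le(\dist(a,v)+1)\|f_n-f_m\|_{\Lhat}^{a}$, definition of $f$ as the pointwise limit, and passage to the limit edge by edge. The differences are minor: you write out the norm-convergence step that the paper omits, and you obtain $f\in\Lhat$ from $f_N-f\in\Lhat$ rather than from the uniform bound on $\|f_k\|_{\Lhat}^{a}$. Also, since $|(f_n-f)(a)|$ does not depend on the edge, taking the supremum directly gives $\|f_n-f\|_{\Lhat}^{a}\le\varepsilon$, so the extra ``care'' and the factor $2$ are unnecessary.
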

\begin{proof}
The proof follows the same lines as in \cite{CoEa}, so we just sketch the main ideas.

Let $(f_k)$ be a Cauchy sequence on $\Lhat$. Choose a vertex $v$ and let $n=\dist(a,v)$. Then,
\[
|f_k(v)| \leq \| f_k \|^v_{\Lhat} \leq (n+1) \|f_k \|_{\Lhat}^a,
\]
and hence, for each vertex $v$, the sequence $(f_k(v))$ is Cauchy and hence converges to a value $f(v)$.

Let $v\sim w$. Given the pointwise convergence of $(f_{k}(v))$ we know that there exists $N \in\N$ large enough such that $|f(v)-f_{N}(v)| < \frac{1}{2}$ and $| f(w)-f_{N}(w)| < \frac{1}{2}$. But hence
\begin{align*}    
 |f(v)-f(w)| 
    &\leq  |f(v)-f_{N}(v)| +|f_{N}(v)-f_{N}(w)| + |f_{N}(w)-f(w)|\\
    &\leq|f(v)-f_{N}(v)| + \| f_N \|_{\Lhat}^{a} +| f_{N}(w)-f(w)|\\
    &<1+\| f_N \|_{\Lhat}^{a}.
\end{align*}

Since $(f_k)$ is a Cauchy sequence in $(\Lhat,\blanknorm)$, it follows that $\|f_k \|_{\Lhat}^a \leq M$ for some $M \in \R$. This implies that $|f(v)-f(w)| \leq 1 +M$, which in turn implies that $f\in \Lhat$.

The proof that $\|f_k -f \|_{\Lhat}^a \to 0$ as $k \to \infty$ follows the same lines as the corresponding proof in \cite{CoEa}, so we omit it.
\end{proof}

%%%%%%%%%%%%%%%%%%%

The next proposition will be useful in what follows.

\begin{prop}\label{Proposition omega L}
For every $v \in G$, we have
\[
\omega(v) := \supfL = \dist(a,v).
\]
\end{prop}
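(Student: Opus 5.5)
The plan is to prove the two inequalities $\omega(v)\le \dist(a,v)$ and $\omega(v)\ge \dist(a,v)$ separately. For each, I will fix $v\in G$ and set $n=\dist(a,v)$.

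For the upper bound, I take any $f\in\Lhat$ with $f(a)=0$ and $\fnorm\le 1$. The normalization $f(a)=0$ means $\fnorm=\supfvw$, so $\supfvw\le 1$. I choose a geodesic path $a=v_0\sim v_1\sim\cdots\sim v_n=v$, which exists since $G$ is connected. Telescoping and the triangle inequality, exactly as in the proof of Proposition~\ref{norm equivalence}, give
\[
|f(v)|=|f(v)-f(a)|\le \sum_{j=0}^{n-1}|f(v_{j+1})-f(v_j)|\le n\supfvw\le n .
\]
Taking the supremum over all admissible $f$ yields $\omega(v)\le \dist(a,v)$. If $v=a$, then $n=0$ and this reads $\omega(a)=0$, which is consistent.

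For the lower bound, I exhibit an extremal function, namely $f(w)=\dist(a,w)$. This function was already noted to lie in $\Lhat$ with $\fnorm=1$, using the triangle inequality for the graph metric: $|\dist(a,w)-\dist(a,u)|\le \dist(w,u)=1$ whenever $w\sim u$. It also satisfies $f(a)=0$, so it is admissible in the supremum, and $|f(v)|=\dist(a,v)$. Hence $\omega(v)\ge\dist(a,v)$, so the supremum is attained and equality holds.

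No step is a real obstacle. The only point needing care is checking that $\fnorm=1$ for $f=\dist(a,\cdot)$, since we need $\fnorm\le 1$ for admissibility. Should the paper's earlier computation of this norm be regarded as insufficient, the metric triangle inequality above gives $\supfvw\le 1$ directly, and that is all admissibility requires.
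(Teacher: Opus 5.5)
Your proof is correct and takes essentially the same approach as the paper. The lower bound uses the same extremal function $\dist(a,\cdot)$, and your telescoping estimate along a geodesic is the unrolled form of the paper's induction on $\dist(a,v)$, which bounds $|f(z_2)|\le |f(z_2)-f(z_1)|+|f(z_1)|$ for adjacent vertices $z_1,z_2$ at consecutive distances from $a$.
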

\begin{proof}
The proof is essentially the same as \cite[Lemma 3.1]{CoEa}, but we include it for the sake of completeness. Let $g(v) = \dist(a,v)$ for $v \in G$. Notice that $g \in \Lhat$, that $g(a)=0$ and that $\|g\|_{\Lhat}^{a}=1$. Hence it follows that $\dist(a,v)=g(v)\leq \omega (v)$.

On the other hand, let $f$ be any function in $\Lhat$ such that $f(a)=0$ and $\fnorm \leq 1$. The equality will follow if we show that $|f(v)|\leq \dist(a,v)$ for any $v \in G$. We argue by induction on $\dist(a,v)$. First notice  $|f(a)|=0\leq \dist(a,a)$. Now suppose that for some $n \in \N_0$, we have that if $v \in G$ and $\dist(a,v)\leq n$, then  $|f(v)| \leq \dist(a,v)$. Take $z_1\sim z_2$ vertices in $G$ with $\dist(a,z_1)=n$ and $\dist(a,z_2)=n+1$. Then
\[        
|f(z_2)| \leq |f(z_2)-f(z_1)|+|f(z_1)| \leq \supfvw + |f(z_1)| \leq \fnorm + \dist(a,z_1) \leq 1 + n = \dist(a,z_2),
    \]
    which finishes the induction and the proof.
    \end{proof}

The next proposition finds a sharp estimate for the modulus of a function $f \in \Lhat$, which depends on the vertex $v$ in which the function is being evaluated.

\begin{prop} \label{Proposition upper bounds for f}
        If $f \in \Lhat$, then for all $z \in G$
        \begin{equation*}
            |f(z)|\leq |f(a)|+\dist(a,z)\supfvw.
        \end{equation*}
 In particular, for $z\in G$ 
\[
|f(z)| \leq \max\{ 1, \dist(a,z) \} \fnorm.
\]
Hence, if $\fnorm \leq 1$, then $|f(v)|\leq \dist(a,v)$ for each $v \in G, v\neq a$.
\end{prop}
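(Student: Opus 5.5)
The first inequality follows from a telescoping estimate along a geodesic, exactly as in the proof of Proposition~\ref{norm equivalence}. Fix $z\in G$ and set $n=\dist(a,z)$. If $n=0$, then $z=a$ and there is nothing to prove. Otherwise, choose a path $a=v_0\sim v_1\sim\cdots\sim v_n=z$ of length $n$. Writing $f(z)=f(a)+\sum_{j=0}^{n-1}\bigl(f(v_{j+1})-f(v_j)\bigr)$ and applying the triangle inequality gives
\[
|f(z)|\leq |f(a)|+\sum_{j=0}^{n-1}|f(v_{j+1})-f(v_j)|\leq |f(a)|+n\supfvw ,
\]
since each summand is a difference along an edge.

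For the second inequality, I split into cases according to $\dist(a,z)$. If $\dist(a,z)=0$, then $|f(z)|=|f(a)|\leq\fnorm=\max\{1,\dist(a,z)\}\fnorm$. If $\dist(a,z)\geq 1$, then $|f(a)|\leq \dist(a,z)|f(a)|$, and the first inequality gives
\[
|f(z)|\leq \dist(a,z)\bigl(|f(a)|+\supfvw\bigr)=\dist(a,z)\fnorm=\max\{1,\dist(a,z)\}\fnorm .
\]
The final assertion is immediate: if $\fnorm\leq 1$ and $v\neq a$, then $\dist(a,v)\geq 1$, so $\max\{1,\dist(a,v)\}=\dist(a,v)$ and hence $|f(v)|\leq\dist(a,v)$.

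There is no real obstacle here. The only care needed is the case distinction at $z=a$: the bound $|f(a)|\leq \dist(a,z)|f(a)|$ fails there, which is exactly why the maximum with $1$ appears in the statement. One could also note sharpness, for instance via $f(v)=\dist(a,v)$ as in Proposition~\ref{Proposition omega L}, but the statement does not require it.
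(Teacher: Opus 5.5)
Your proof is correct, and for the first inequality you take a more direct route than the paper.

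The paper's proof runs as follows:
\begin{itemize}
\item It treats constant $f$ separately.
\item For nonconstant $f$ it normalizes to $g=(f-f(a))/C$ with $C=\sup_{v\sim w}|f(v)-f(w)|$, so that $g(a)=0$ and $\|g\|_{\Lhat}^{a}=1$.
\item It then invokes Proposition~\ref{Proposition omega L} to get $|g(z)|\le \dist(a,z)$, that is, $|f(z)-f(a)|\le \dist(a,z)\,C$.
\end{itemize}

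Your telescoping along a geodesic proves that same bound at once. It needs neither the constant/nonconstant split (which the paper requires only because it divides by $C$) nor the normalization. Nothing is lost: the induction on $\dist(a,\cdot)$ inside Proposition~\ref{Proposition omega L} is your edge-by-edge triangle-inequality estimate, organized by distance levels instead of along a single path.

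The paper's detour buys only an organizational benefit. It presents the estimate as a consequence of the extremal identity $\omega(v)=\dist(a,v)$, which also records that the bound is attained by $f=\dist(a,\cdot)$.

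Your derivation of the second and third assertions is the same as the paper's. You split off $z=a$ and use $|f(a)|\le \dist(a,z)\,|f(a)|$ when $\dist(a,z)\ge 1$.
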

\begin{proof}
The proof of the first part follows the same argument as \cite{CoEa}, but we include the proof for completeness. Notice that if $f$ is a constant function the result follows easily. Now, suppose $f$ is a nonconstant function in $\Lhat$ and let $C:=\supfvw \neq 0$. Consider the function $g:G\to \C$ defined by
\[
g(z)=\frac{1}{C} (f(z)-f(a)) \quad \text{ for } z\in G.
\]
Observe that $g(a)=0$ and $\gnorm =1$. By Proposition ~\ref{Proposition omega L}, it follows that $|g(z)|\leq \dist(a,z)$, for all $z \in G$. Hence, for any $f \in \Lhat$ and for all $z \in G$ we have
\[
|f(z)| \leq |f(a)| + |f(z)-f(a)| = |f(a)|+|g(z)|\supfvw \leq |f(a)|+\dist(a,z)\supfvw,
\]
which shows the first part of the proposition.

Now, for every $z \in G$ we have
\[
|f(z)| \leq |f(a)|+\dist(a,z)\supfvw \leq \max\{ 1, \dist(a,z) \} \fnorm,
\]
which shows the second part of the proposition. The last part follows trivially since if $z\neq a$, then $\max\{1,\dist(f,z)\}=\dist(a,z)$.
\end{proof}

The estimate in Proposition \ref{Proposition upper bounds for f} is sharp: take the function $f(v)=\dist(a,v)$ for all $v \in G$. As an easy consequence of the above inequality, we obtain the following result.

\begin{corollary}\label{cor:pointwise}
If $(f_n)$ is a sequence in $\Lhat$ such that $f_n \to 0$ as $n \to \infty$, then $f_n(v) \to 0$ as $n \to \infty$ for each $v \in G$.
\end{corollary}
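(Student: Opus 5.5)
We will deduce the corollary directly from the pointwise estimate in Proposition~\ref{Proposition upper bounds for f}. Here ``$f_n \to 0$'' means convergence in the norm $\blanknorm$, so the hypothesis is that $\|f_n\|_{\Lhat}^{a} \to 0$ as $n \to \infty$. Because of Proposition~\ref{norm equivalence}, it does not matter which base vertex $a$ we use: norm convergence to $0$ for one base vertex implies it for every base vertex.

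Fix a vertex $v \in G$. We apply the second inequality of Proposition~\ref{Proposition upper bounds for f} to each $f_n$. This gives
\[
|f_n(v)| \leq \max\{1, \dist(a,v)\}\, \|f_n\|_{\Lhat}^{a}
\]
for every $n$. The factor $\max\{1,\dist(a,v)\}$ is a finite constant that depends only on $v$ and $a$, not on $n$. Letting $n \to \infty$, the right-hand side tends to $0$, and therefore $f_n(v) \to 0$.

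There is no real obstacle here. The only point to check is that the constant in front of the norm is independent of $n$, which is immediate since it involves only the fixed vertex $v$.

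Alternatively, one could avoid Proposition~\ref{Proposition upper bounds for f} and use Proposition~\ref{norm equivalence} with $b = v$ instead:
\[
|f_n(v)| \leq \|f_n\|_{\Lhat}^{v} \leq (\dist(a,v)+1)\, \|f_n\|_{\Lhat}^{a} \to 0,
\]
which gives the same conclusion.
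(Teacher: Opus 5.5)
Your proof is correct and is essentially the argument the paper intends: the corollary is stated as an immediate consequence of the pointwise bound $|f(z)|\le \max\{1,\dist(a,z)\}\,\|f\|_{\Lhat}^{a}$ from Proposition~\ref{Proposition upper bounds for f}, with the constant independent of $n$. Your alternative via Proposition~\ref{norm equivalence} with $b=v$ is also valid; it is the same estimate the paper uses at the start of its completeness proof.
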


Lastly, we can put together the information above into the following theorem.

\begin{theorem}\label{Prop Lhat is a functional Banach space}
    $\Lhat$ is a functional Banach space.
\end{theorem}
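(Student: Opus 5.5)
The plan is to check the standard definition of a functional Banach space (as in \cite{CoEa}). We need: $\Lhat$ is a Banach space of complex-valued functions on the set $G$ with pointwise operations. For each $v\in G$, the point evaluation $K_v(f)=f(v)$ is a bounded linear functional. Finally, $\Lhat$ separates points: for $v\neq w$ there is $f\in\Lhat$ with $f(v)\neq f(w)$. In some formulations, we also require that for each $v$ some $f$ satisfies $f(v)\neq 0$. I will verify each of these in turn, using the results already established.

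For the Banach space structure, I invoke the propositions above. $\blanknorm$ is a norm on the vector space $\Lhat$ of functions with pointwise operations, and $\Lhat$ is complete with this norm. By Proposition~\ref{norm equivalence}, none of this depends on the choice of $a$.

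For boundedness of point evaluations, the key step is Proposition~\ref{Proposition upper bounds for f}. It gives $|K_v(f)|=|f(v)|\le \max\{1,\dist(a,v)\}\fnorm$ for all $f\in\Lhat$. Linearity of $K_v$ is obvious, so $K_v$ is bounded with $\|K_v\|\le \max\{1,\dist(a,v)\}$. Proposition~\ref{Proposition omega L} shows this is in fact sharp for $v\neq a$, although sharpness is not needed. Corollary~\ref{cor:pointwise} is the sequential restatement of the same fact.

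For separation of points, the constant function $1$ lies in $\Lhat$ and is nonzero everywhere. Given $v\neq w$, the function $f(z)=\dist(v,z)$ is in $\Lhat$, since adjacent vertices have distances to $v$ differing by at most $1$ (the triangle inequality). It satisfies $f(v)=0\neq \dist(v,w)=f(w)$. None of these steps is a real obstacle, since each is a direct citation of earlier results. The only point needing care is matching the exact definition of ``functional Banach space'' used in \cite{CoEa}, so that every required axiom is listed and checked.
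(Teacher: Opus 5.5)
Your proposal is correct and matches the paper's proof: the Banach space structure comes from the earlier propositions, the point evaluations are bounded by Proposition~\ref{Proposition upper bounds for f}, and the constant function $1$ shows that no point is a common zero of all functions in $\Lhat$. Your additional check that $\dist(v,\cdot)$ separates points is not in the paper's argument, but it is valid and covers the stronger versions of the definition of a functional Banach space.
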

\begin{proof}
We already showed that $\Lhat$ is a Banach space of complex-valued functions. Clearly, there is no point in $G$ in which all functions vanish, and the estimate in Proposition \ref{Proposition upper bounds for f} implies that the point evaluation functionals are bounded. Hence $\Lhat$ is a functional Banach space. 
\end{proof}

We also have the following result, which was not shown in \cite{CoEa}.

\begin{theorem} \label{L not separable}
    The Lipschitz space $\Lhat$ is not separable.
\end{theorem}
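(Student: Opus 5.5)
The plan is the standard uncountable-separated-family argument: I will exhibit an uncountable family $\{f_S\}$ in $\Lhat$, indexed by subsets $S$ of a countably infinite set of vertices, such that $\|f_S - f_T\|_{\Lhat}^{a} \geq 1$ whenever $S \neq T$. Then the open balls of radius $1/2$ centered at the $f_S$ are pairwise disjoint. A dense subset would have to meet each of these uncountably many balls, so it could not be countable, and $\Lhat$ cannot be separable.

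To build the family, I first need an infinite set of vertices. Since $G$ is infinite, connected and locally finite, I can pick an infinite sequence of distinct vertices $v_1, v_2, v_3, \dots$. For instance, by K\H{o}nig's lemma there is a geodesic ray from $a$. Even more simply, any enumeration of the countably infinite vertex set will do, because adjacency between the chosen vertices turns out to be harmless here.

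For each subset $S \subseteq \N$, I define $f_S := \chi_{\{v_n : n \in S\}}$, the characteristic function of the corresponding set of vertices. Each $f_S$ is bounded, so it belongs to $\Lhat$, as observed earlier in the paper; explicitly, $\sup_{v\sim w}|f_S(v)-f_S(w)| \leq 1$. The collection $\{f_S : S\subseteq \N\}$ is uncountable because $\mathcal{P}(\N)$ is uncountable and distinct $S$ give distinct functions.

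The main step is the separation estimate. Suppose $S \neq T$ and pick $n$ in the symmetric difference. Then $h = f_S - f_T$ satisfies $|h(v_n)| = 1$. I will use the pointwise bound of Proposition~\ref{Proposition upper bounds for f},
\[
|h(z)| \leq \max\{1,\dist(a,z)\}\,\|h\|_{\Lhat}^{a}.
\]
This only gives $\|h\|_{\Lhat}^{a} \geq 1/\max\{1,\dist(a,v_n)\}$, which is not uniform in $n$, so I will avoid it and argue directly, splitting into two cases.

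\emph{Case $v_n = a$.} Then $\|h\|_{\Lhat}^{a} \geq |h(a)| = 1$.

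\emph{Case $v_n \neq a$.} Take a path from $a$ to $v_n$. Either $h(a) \neq 0$, in which case $|h(a)| = 1$ because $h$ takes values in $\{-1,0,1\}$. Or $h(a) = 0$, in which case $h$ changes value somewhere along the path, and at that step two adjacent vertices differ by at least $1$ in absolute value.

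In both cases $\|h\|_{\Lhat}^{a} \geq 1$. This completes the argument, and I expect no real obstacle beyond noticing that the naive pointwise estimate is not uniform and that the direct two-case argument handles this.
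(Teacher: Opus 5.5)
Your proof is correct and is essentially the paper's argument: the paper also uses an uncountable family of $\{0,1\}$-valued functions on $G$, and uses connectivity to find adjacent vertices where the difference jumps from $0$ to $\pm 1$, so distinct members are at distance at least $1$. The only difference is that the paper restricts to functions with $f(a)=0$, which removes the need for your case split on whether $h(a)\neq 0$.
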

\begin{proof}
Consider the subset $\mathcal{A}$ of $\Lhat$ given by
$$\mathcal{A}=\{f:G\to \{0,1\} \mid f(a)=0\}.$$ 
Let $f,g \in \mathcal{A}$ such that $f\neq g$. It is easy to see that there must exist neighboring vertices $w_1$ and $w_2$ such that $f(w_1)=g(w_1)$ and $f(w_2)\neq g(w_2)$. Hence, $|(f-g)(w_1) - (f-g)(w_2)|=1$ and since $f(a)=g(a)=0$, it follows that
\[
\| f-g \|_{\Lhat}^{a}= \sup\limits_{v \sim w} |(f-g)(v) - (f-g)(w)|\geq 1.
\]

Now enumerate all the vertices of $G$, except for $a$, and form the sequence $(v_n)$. Given $f\in \mathcal{A}$, define the sequence $(f_n)$ as $f_n=f(v_n)\in \{0,1\}$ for each $n\in \N$. Notice that this gives a bijective relation between $\mathcal{A}$ and the set of all sequences with values in $\{0,1\}.$ Since this set of sequences is uncountable, it follows that $\mathcal{A}$ is uncountable.

Since we have found a uncountable subset of $\Lhat$ such that all its members are at least one unit apart, it follows that $\Lhat$ cannot be separable.
\end{proof}

\section{The little Lipschitz space of a graph}

We now define the little Lipschitz space.

\begin{defn}\label{def f in L0}
The little Lipschitz space $\LAcerohat$ is defined as the set of all functions $f$ in $\Lhat$ such that
\[
    \lim\limits_{\dist(a,v)\rightarrow \infty}\hspace{.2cm} \maxfNv=0
\]
where $N_v:=\{w \in G \mid w\sim v\}$. Notice that since $G$ is locally finite, $\maxfNv$ is well defined.
\end{defn}

It is straightforward to show that $\LAcerohat$ is a vector subspace of $\Lhat$. We will now show that even though the limit of the distance is taken with respect to a specific fixed vertex $a$, the set of functions that satisfy such property does not vary if the vertex $a$ is changed.

\begin{prop}\label{prop: la igual lb}
$\LAcerohat = \Lhat_{0}^{b}$ for all $a,b \in G$.
\end{prop}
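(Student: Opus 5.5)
By symmetry in $a$ and $b$, it suffices to prove the inclusion $\LAcerohat \subseteq \Lhat_{0}^{b}$; the reverse inclusion follows by swapping the roles of the two vertices. The key observation is that the quantity $\maxfNv$ does not depend on the base vertex at all. Only the way ``$v$ goes to infinity'' is measured relative to $a$ or $b$. So the whole statement reduces to the claim that $\dist(a,v)\to\infty$ if and only if $\dist(b,v)\to\infty$, which the triangle inequality controls.

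Concretely, I will set $n=\dist(a,b)$ and note that for every $v\in G$,
\[
\dist(a,v)-n \leq \dist(b,v) \leq \dist(a,v)+n .
\]
Now fix $f\in\LAcerohat$ and $\varepsilon>0$. By Definition \ref{def f in L0}, there is $R$ such that $\maxfNv<\varepsilon$ whenever $\dist(a,v)>R$. If $\dist(b,v)>R+n$, the inequality above gives $\dist(a,v)\geq \dist(b,v)-n>R$, and hence $\maxfNv<\varepsilon$. Since $f\in\Lhat$ already (membership in $\Lhat$ is independent of the base vertex), this shows
\[
\lim_{\dist(b,v)\to\infty}\maxfNv=0,
\]
that is, $f\in\Lhat_{0}^{b}$.

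I do not expect a genuine obstacle here. The only point needing care is to state precisely what the limit means, namely the $\varepsilon$--$R$ formulation over all vertices $v$ with $\dist(a,v)>R$, so that the shift of the threshold by $n$ is clearly legitimate. Local finiteness is not needed beyond guaranteeing that the maximum over $N_v$ is well defined, which Definition \ref{def f in L0} already notes.
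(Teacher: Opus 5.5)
Your proposal is correct and matches the paper's proof: both shift the threshold by $\dist(a,b)$ using the triangle inequality to get $\LAcerohat \subseteq \Lhat_{0}^{b}$, then swap $a$ and $b$ for the reverse inclusion. Your remark that $\maxfNv$ itself does not depend on the base vertex is the reason this works, and the paper uses it implicitly.
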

\begin{proof}
Take $a,b$ arbitrary vertices of $G$. Let $f\in \LAcerohat$, then by definition
\[
\lim\limits_{\dist(a,v)\rightarrow \infty}\hspace{.2cm} \maxfNv=0.
\]
This implies that for each $\epsilon > 0$ there exists $N \in \N$ such that
\[
\text{ if }    \dist(a,v)>N \text{ then } | f(v) - f(w)| <\epsilon \quad \text { for all } w \in N_v.
\]
Now define $N_1=N+\dist(a,b)$. Notice that if $v$ satisfies $\dist(b,v)> N_1$, then this implies that $\dist(a,v)>N$ and hence $| f(v) - f(w)| <\epsilon$ for all $w \in N_v$.

Hence we have shown that 
\[
\lim\limits_{\dist(b,v)\rightarrow \infty}\hspace{.2cm} \maxfNv=0.
\]
Therefore $f \in \Lhat_{0}^{b}$ and thus $\LAcerohat \subseteq \Lhat_{0}^{b}$. Since $a,b$ were chosen arbitrarily then $\LAcerohat=\Lhat_{0}^{b}$ for all $a,b \in G$.
\end{proof}

From this point forward we will not include the letter $a$ as a superscript in $\LAcerohat$ since we have just proved that the set $\LAcerohat$ is the same regardless of the value of the fixed vertex $a$ of $G$.

It is important to notice that the functions in $\Lcerohat$ are not necessarily bounded, as the next example shows.

\begin{ex}
Take the function $f:G\to\C$ defined by $f(a)=0$ and
\begin{equation*}
    f(v)=\sum\limits_{k=1}^{\dist(a,v)} \frac{1}{k} \quad \text{ for all } v \in G\setminus{\{a\}}.
\end{equation*}
Notice that when $\dist(a,v)\rightarrow \infty$ then $f(v)\rightarrow \infty$, as the harmonic sum diverges. Now take $v \in G$, with $v \neq a$ and let $v \sim w \in G$. If $\dist(a,v)=\dist(a,w)$, then 
\[
\left|\sum\limits_{k=1}^{\dist(a,v)} \frac{1}{k}-\sum\limits_{k=1}^{\dist(a,w)} \frac{1}{k}\right|=0.
\]
If $\dist(a,v)\neq \dist(a,w)$, then we may assume that $\dist(a,v)=\dist(a,w)+1$ and thus
\[
\left|\sum\limits_{k=1}^{\dist(a,v)} \frac{1}{k}-\sum\limits_{k=1}^{\dist(a,w)} \frac{1}{k}\right| =\frac{1}{\dist(a,v)+1}.
\]
In either case, we obtain that
\[
\max_{w \in N_v} \left|\sum\limits_{k=1}^{\dist(a,v)} \frac{1}{k}-\sum\limits_{k=1}^{\dist(a,w)} \frac{1}{k}\right| \leq \frac{1}{\dist(a,v)+1},
\]
and hence $f\in \Lcerohat$ (and also in $\Lhat$) and $f$ is not bounded.
\end{ex}

We continue our investigation of the space $\Lcerohat$ by showing that their values cannot grow faster than their distance to the root. In this case, the proof differs from that of \cite[Lemma 3.4]{CoEa}, even though the statement is the same. 

\begin{prop}\label{fn in L0}
       If $f \in \Lcerohat$, then
        \begin{equation*}
            \lim\limits_{\dist(a,v)\rightarrow \infty} \frac{f(v)}{\dist(a,v)}=0.
        \end{equation*}
\end{prop}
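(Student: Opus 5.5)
Fix $\epsilon>0$. Since $f\in\Lcerohat$, Definition~\ref{def f in L0} gives $N\in\N$ such that $\dist(a,u)>N$ implies $|f(u)-f(w)|<\epsilon$ for every $w\in N_u$. The plan is to compare $f(v)$, for $v$ far from $a$, with the value of $f$ at a vertex of a geodesic from $a$ to $v$ lying at distance exactly $N+1$ from $a$. The errors are then small edge increments along the far part of the geodesic, plus a bounded contribution from the near part, which is controlled by Proposition~\ref{Proposition upper bounds for f}.

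First I will fix $v$ with $n=\dist(a,v)>N+1$ and a geodesic $a=v_0\sim v_1\sim\cdots\sim v_n=v$. Since it is a geodesic, $\dist(a,v_j)=j$ for every $j$. This is the key point where the graph setting differs from trees: a vertex may have several parents, but choosing any geodesic still makes the distances along the path increase by exactly one. That is all that is needed, so I expect no further obstacle.

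For $j\ge N+1$ we have $\dist(a,v_j)=j>N$ and $v_{j+1}\in N_{v_j}$, so $|f(v_{j+1})-f(v_j)|<\epsilon$. Telescoping gives
\[
|f(v)|\le |f(v_{N+1})|+\sum_{j=N+1}^{n-1}|f(v_{j+1})-f(v_j)|\le |f(v_{N+1})|+(n-N-1)\epsilon .
\]
By Proposition~\ref{Proposition upper bounds for f}, $|f(v_{N+1})|\le |f(a)|+(N+1)\supfvw=:C_N$, a constant depending only on $N$ and $f$, not on $v$. Dividing by $n$,
\[
\frac{|f(v)|}{\dist(a,v)}\le \frac{C_N}{\dist(a,v)}+\epsilon ,
\]
and choosing $\dist(a,v)$ large enough that $C_N/\dist(a,v)<\epsilon$ yields $|f(v)|/\dist(a,v)<2\epsilon$. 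Since $\epsilon$ was arbitrary, the limit is $0$.
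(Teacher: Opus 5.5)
Your proof is correct and follows essentially the same route as the paper. Both arguments choose a geodesic from $a$ to $v$, bound $f$ at the vertex near distance $N$ using Proposition~\ref{Proposition upper bounds for f}, and telescope the $\epsilon$-small increments along the rest of the path. The only cosmetic differences are that the paper first reduces to $f(a)=0$ and anchors at the vertex at distance $N$, while you keep the $|f(a)|$ term and anchor at distance $N+1$.
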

  \begin{proof} 
    Let $f \in \Lcerohat$. We first assume that $f(a)=0$.
    %Notice that if $f$ is a constant function the result %is trivial, so we may assume that $f$ is not constant %and hence $\supfvw >0$. 
    Let $\epsilon >0$.
    % Choose $\epsilon >0$ such that 
    % \[
    % \epsilon < \supfvw.
    % \]
    Since $f \in \Lcerohat$, there exists $N \in \N$ such that if  $\dist(a,v)>N$, then we have $|f(v)-f(w)|<\epsilon$ for all $w \sim v$.
    
    Let us take $z \in G$ such that $\dist(a,z)> N$. Consider a shortest path from $a$ to $z$, denoted by $a=u_0\sim u_1\sim \dots \sim u_m = z$, where $\dist(a,z)=m$ and $\dist(a,u_N)= N$. From Proposition \ref{Proposition upper bounds for f}, and since $f(a)=0$, we have
    \begin{equation}\label{eqq1}
         |f(u_N)|\leq \dist(a,u_N)\supfvw = N \supfvw .
    \end{equation}
    On the other hand, by the triangle inequality we have
    \begin{equation}\label{eqq2}
        |f(z)| \leq |f(u_N)|+ \sum\limits_{k=N+1}^{m}|f(u_k)-f(u_{k-1})|.
    \end{equation}
    Hence from the inequalities \eqref{eqq1} and \eqref{eqq2}, and since $\dist(a,u_k)>N$ for $k=N+1, N+2, \dots, m$, we can see that
    \begin{equation*}
    \begin{split}
        |f(z)|
        &\leq N \supfvw + (m-N)\epsilon \\
        &= N (\supfvw - \epsilon) + \dist(a,z)\epsilon.
         \end{split}
    \end{equation*}
The previous equation implies that
    \begin{equation*}
        \frac{|f(z)|}{\dist(a,z)} \leq \frac{N}{\dist(a,z)}\left(\supfvw-\epsilon\right) + \epsilon.
    \end{equation*}
This implies that
\begin{equation*}
        \limsup\limits_{\dist(a,z)\rightarrow \infty}\frac{|f(z)|}{\dist(a,z)}\leq\limsup\limits_{\dist(a,z)\rightarrow \infty}\frac{N}{\dist(a,z)}\left(\supfvw-\epsilon\right) + \epsilon.
\end{equation*}
Since $N$ only depends on the choice of $\epsilon$, we can therefore conclude that
\begin{equation*}
    \limsup\limits_{\dist(a,z)\rightarrow \infty}\frac{|f(z)|}{\dist(a,z)}\leq \epsilon.
\end{equation*}
Since $\epsilon>0$ can be taken arbitrarily small, it follows that
    \begin{equation*}
        \lim\limits_{\dist(a,v)\rightarrow \infty}\frac{f(v)}{\dist(a,v)}=0.
    \end{equation*}
    For the general case, when $f(a)$ does not necessarily equal $0$, let us define $g(v)=f(v)-f(a)$ for all $v \in G$. Notice that $g(a)=0$ and since 
    \begin{equation*}
        \maxgNv = \maxfNv
    \end{equation*}
    for any $v \in G$, then $g\in \Lcerohat$ if $f\in \Lcerohat$. Hence we can conclude
    \begin{equation*}
        \lim\limits_{\dist(a,v)\rightarrow \infty}\frac{f(v)}{\dist(a,v)}=\lim\limits_{\dist(a,v)\rightarrow \infty}\frac{f(a)}{\dist(a,v)}+\lim\limits_{\dist(a,v)\rightarrow \infty}\frac{g(v)}{\dist(a,v)}=0. \qedhere
    \end{equation*}
\end{proof}

We now prove that $\Lcerohat$ is a Banach space: this will be done by showing that $\Lcerohat$ is the closure of the set of functions with finite support. We set
\[
X=\{ f : G \to \C \mid f \text{ is of finite support} \}.
\]
Clearly, $X \subseteq \Lcerohat$. We now show that the closure of $X$ is in $\Lcerohat$.

Take $f$ in the closure of $X$ and fix $\epsilon>0$; we can then find $g_\epsilon \in X$ such that $\|f-g_\epsilon\|_{\Lhat}^{a}<\epsilon$. Hence,
\[
    |f(v)-g_\epsilon(v)-f(w)+g_\epsilon(w)|<\epsilon,
    \]
    for all $v \sim w \in G$.
    
Since $g_\epsilon \in X$, then there exists $M \in \N$ such that if $\dist(a,v)>M$ it follows that $g_\epsilon(v)=0$. Thus, it holds that
\[
|f(v)-f(w)|<\epsilon 
\]
for all $v\sim w$ such that $\dist(a,v)>M$ and $\dist(a,w)>M$. This implies that
\begin{equation*}
    \lim\limits_{\dist(a,v)\rightarrow \infty}\hspace{.2cm} \maxfNv=0
\end{equation*}
and thus $f\in \Lcerohat$.

We now show that, in fact, the closure of $X$ is $\Lcerohat$. Observe that this also differs from \cite{CoLo} and from \cite{CoEa}, where they show that the set of characteristic functions of ``sectors'' is dense.

% The set of functions with finite support is dense in L_0
\begin{theorem} \label{dense L0}
The closure of $X$  in $\Lhat$ equals $\Lcerohat$. In particular $\Lcerohat$ is a closed subspace of $\Lhat$ and hence is a separable Banach space.
\end{theorem}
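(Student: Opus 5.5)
The inclusion $\overline{X}\subseteq \Lcerohat$ was shown just before the statement, so the plan is to prove the reverse inclusion: given $f\in\Lcerohat$ and $\epsilon>0$, I will find $g\in X$ with $\|f-g\|_{\Lhat}^{a}<C\epsilon$ for a constant $C$ independent of $\epsilon$. Simply truncating $f$ to a ball fails, because $f$ may be unbounded (as in the harmonic-sum example), so the jump across the boundary of the ball can be large. Instead I will use a radial cutoff that decays slowly, and the slowness will be paid for by the sublinear growth given in Proposition \ref{fn in L0}.

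First, I may assume $f(a)=0$. Otherwise I replace $f$ by $f-f(a)$ and approximate the constant $f(a)$ separately, using the same cutoff argument below with $f\equiv 1$; there the error is only $\sup|\Delta\psi|\le 1/M$. For $M\ge N$, define the cutoff
\[
\psi_M(v)=\max\Bigl\{0,\;\min\Bigl\{1,\;2-\frac{\dist(a,v)}{M}\Bigr\}\Bigr\}.
\]
This function equals $1$ on the ball of radius $M$ and vanishes when $\dist(a,v)\ge 2M$. Since $|\dist(a,v)-\dist(a,w)|\le 1$ for $v\sim w$, we have $|\psi_M(v)-\psi_M(w)|\le 1/M$. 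I set $g=\psi_M f\in X$; local finiteness makes the support finite, since balls are finite.

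Now estimate $h=f-g=(1-\psi_M)f$. We have $h(a)=0$. For $v\sim w$,
\[
|h(v)-h(w)|\le |1-\psi_M(v)|\,|f(v)-f(w)|+|f(w)|\,|\psi_M(v)-\psi_M(w)|.
\]
The first term vanishes unless $\dist(a,v)>M$. In that case $|f(v)-f(w)|<\epsilon$ once $M\ge N(\epsilon)$, using the definition of $\Lcerohat$. The second term vanishes unless $w$ lies in the annulus $M-1\le \dist(a,w)\le 2M+1$. There it is at most
\[
\frac{|f(w)|}{M}\le \frac{|f(w)|}{\dist(a,w)}\cdot\frac{2M+1}{M}\le 3\epsilon,
\]
by Proposition \ref{fn in L0}, provided $M$ is large enough. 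Hence $\|f-g\|_{\Lhat}^{a}\le 4\epsilon$ for large $M$, which proves density. The main obstacle is this second term: it is exactly where the naive truncation breaks down, and it is why the linear-ramp cutoff is paired with Proposition \ref{fn in L0}. The only care needed is making sure that the vertices where $\psi_M$ changes are far enough out that the sublinear bound applies.

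Finally, $\Lcerohat=\overline{X}$ is closed in the Banach space $\Lhat$, so it is complete. For separability, note that $X$ is spanned by the countably many indicator functions $\chi_{\{v\}}$. Finite linear combinations of these with coefficients in $\mathbb{Q}+i\mathbb{Q}$ form a countable set that is dense in $X$, since the norm is continuous in finitely many coefficients. This set is therefore dense in $\Lcerohat$, which is consequently separable.
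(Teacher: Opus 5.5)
Your proof is correct and follows essentially the same route as the paper: multiply by a radial cutoff that drops from $1$ to $0$ across the annulus between radii $M$ and $2M$, then bound the term $|1-\psi_M|\,|f(v)-f(w)|$ using the defining condition of $\Lcerohat$ and the cutoff-gradient term using Proposition~\ref{fn in L0}. The differences are cosmetic: the paper uses the factor $\frac{2N-\dist(a,v)}{\dist(a,v)}$ rather than your linear ramp $2-\dist(a,v)/M$, and it checks three distance ranges rather than using your product-rule split. Also, your preliminary reduction to $f(a)=0$ is unnecessary, since $h(a)=0$ and Proposition~\ref{fn in L0} hold for every $f\in\Lcerohat$.
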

\begin{proof}
Let $g \in \Lcerohat$ and $\epsilon >0$. We will show there exists a function $f \in X$ such that $\| g-f \|_{\Lhat}^{a} < \epsilon$. Since $g \in \Lcerohat$, and by Proposition~\ref{fn in L0}, there exists $N \in \N$ such that if $\dist(a,v)\geq N$ then $\maxgNv < \frac{\epsilon}{4}$ and $\frac{|g(v)|}{\dist(a,v)} < \frac{\epsilon}{4}$.

Define $f:G \to \C$ as
\begin{center}
    $ f(v) =  \begin{cases} \quad g(v), &  \quad  \text{ if }\quad   \dist(a,v) \leq N, \\   \quad  \frac{2N-\dist(a,v)}{\dist(a,v)}g(v),  & \quad \text{ if }\quad  N \leq \dist(a,v) \leq 2N, \\ \quad  0, & \quad \text{ if }\quad \dist(a,v) \geq 2N. \end{cases}$
\end{center}

Notice $f$ has finite support. Now we must only show that $\| g-f \|_{\Lhat}^{a} < \epsilon$. Since $f(a)=g(a)$, we need only show that
\begin{equation*}
    | g(v)-g(w)-f(v)+f(w)| < \epsilon\quad \text{ for all }  v\sim w.
\end{equation*}

We proceed by cases:

\begin{itemize}

\item Suppose $v\sim w$ are such that $\dist(a,v)\leq N$ and $\dist(a,w)\leq N$. Then
\begin{equation*}
|g(v)-g(w)-f(v)+f(w)|=|g(v)-g(w)-g(v)+g(w)| =0 < \epsilon.
\end{equation*}

% \item Suppose $v\sim w$ such that $\dist(a,v)=N$ and $\dist(a,w)= N+1$. Then
% \begin{align*}
% |g(v)-g(w)-f(v)+f(w)|&=\left| g(v)-g(w)-g(v)+\frac{2N-\dist(a,w)}{\dist(a,w)}g(w)\right|\\
%         &=\left|\frac{2N-(N+1)}{N+1}g(w)-g(w)\right|\\
%         &=\left|\frac{2}{N+1} \right||g(w)|\\
%         &=\frac{2|g(w)|}{\dist(a,w)}.
% \end{align*}
% Since $\dist(a,w)>N$, we have that $\frac{|g(w)|}{\dist(a,w)}<\frac{\epsilon}{4}$, which implies that
% \begin{equation*}
%     |g(v)-g(w)-f(v)+f(w)|<\epsilon.
% \end{equation*}

\item Suppose $v\sim w$ such that $N\leq\dist(a,v)\leq 2N$ and $N\leq\dist(a,w)\leq 2N$. Then
\begin{equation}\label{eq:N_2N_1}
\begin{split}
|g(v)-g(w)-f(v)+f(w)|&= \left|g(v)-\frac{2N-\dist(a,v)}{\dist(a,v)}g(v)-g(w)+\frac{2N-\dist(a,w)}{\dist(a,w)}g(w)\right|\\
&\leq |g(v)-g(w)|+\left|\frac{2N-\dist(a,v)}{\dist(a,v)}g(v)-\frac{2N-\dist(a,w)}{\dist(a,w)}g(w)\right|.
\end{split}
\end{equation}
Let us first estimate the second term on the right hand-side of inequality \eqref{eq:N_2N_1}. By the triangle inequality we obtain
\begin{equation}\label{eq:N_2N_2}
\begin{split}
\left|\frac{2N-\dist(a,v)}{\dist(a,v)}g(v)-\frac{2N-\dist(a,w)}{\dist(a,w)}g(w)\right|\leq {}& 
        \left|\frac{2N-\dist(a,v)}{\dist(a,v)}g(v)-\frac{2N-\dist(a,w)}{\dist(a,w)}g(v)\right| \\
        & \quad +\left|\frac{2N-\dist(a,w)}{\dist(a,w)}g(v)-\frac{2N-\dist(a,w)}{\dist(a,w)}g(w)\right|  \\
        = {} & \left|\frac{2N}{\dist(a,v)}
-\frac{2N}{\dist(a,w)} \right||g(v)| \\
& \quad + \left|\frac{2N-\dist(a,w)}{\dist(a,w)}\right||g(v)-g(w)|.
\end{split}
\end{equation}

Observe that since $v\sim w$ then $|\dist(a,v)-\dist(a,w)|\leq 1$ and since $\dist(a,w)\geq N$ then $\frac{N}{\dist(a,w)}\leq 1$; this implies that
\begin{equation}\label{eq:N_2N_3}
    \left|\frac{2N}{\dist(a,v)}-\frac{2N}{\dist(a,w)} \right||g(v)|\leq\left|\frac{2N}{\dist(a,v)\dist(a,w)}
\right||g(v)|\leq \frac{2|g(v)|}{\dist(a,v)}.
\end{equation}
On the other hand, since $N\leq\dist(a,w)\leq 2N$ then $0\leq \frac{2N-\dist(a,w)}{\dist(a,w)}\leq 1$ which implies
\begin{equation}\label{eq:N_2N_4}
    \left|\frac{2N-\dist(a,w)}{\dist(a,w)}\right||g(v)-g(w)|\leq|g(v)-g(w)|.
\end{equation}
Putting together inequalities \eqref{eq:N_2N_1}, \eqref{eq:N_2N_2}, \eqref{eq:N_2N_3} and \eqref{eq:N_2N_4} we obtain that
\begin{equation*}
    |g(v)-g(w)-f(v)+f(w)|\leq |g(v)-g(w)|+\frac{2|g(v)|}{\dist(a,v)}+|g(v)-g(w)|.
\end{equation*}
Since $\dist(a,v)\geq N$, then by the choice of $N$ we can conclude that
\begin{equation*}
    |g(v)-g(w)-f(v)+f(w)|<\frac{\epsilon}{4}+\frac{2\epsilon}{4}+\frac{\epsilon}{4}=\epsilon.
\end{equation*}

 % \item Suppose $v\sim w$ such that  $\dist(a,v)=2N-1$ and $\dist(a,w)=2N$. Then
 % \begin{equation*}
 %     \begin{split}
 %     |g(v)-g(w)-f(v)+f(w)| &= \left| g(v)-g(w)-\frac{2N-\dist(a,v)}{\dist(a,v)}g(v)+0\right|\\
 %     &=|g(v)-g(w)| + \frac{1}{\dist(a,v)} |g(v)| \\
 %     &< \frac{\epsilon}{4}+\frac{\epsilon}{4}\\
 %     &< \epsilon,
 %     \end{split}
 % \end{equation*}
 % since $\dist(a,v)>N$ and $\dist(a,w)>N$.

\item Suppose $v\sim w \in G$ such that  $\dist(a,v)\geq 2N$ and $\dist(a,w) \geq 2N$. Then $f(v)=f(w)=0$ which implies
\begin{equation*}
    |g(v)-g(w)-f(v)+f(w)|=|g(v)-g(w)|<\epsilon.
\end{equation*}
\end{itemize}

The cases above allow us to conclude that
\begin{equation*}
    | g(v)-g(w)-f(v)+f(w)| < \epsilon\quad \text{ for all }  v\sim w \in G,
\end{equation*}
and thus $\|g-f \|_{\Lhat}^{a} < \epsilon$ which proves $X$ is dense in $\Lcerohat$, as desired.

Finally, observe that since $X$ is clearly separable, it follows that $\Lcerohat$ is also a separable Banach space.
\end{proof}

The next proposition is the analogue of Proposition~\ref{Proposition omega L}, and will be useful in what follows.

\begin{prop}\label{Proposition omega L0}
For $v \in G$, \quad 
\[
\omega_0(v) := \supfLcero = \dist(a,v).
\]
\end{prop}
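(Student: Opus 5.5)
The upper bound $\omega_0(v)\leq \dist(a,v)$ comes for free. Since $\Lcerohat\subseteq \Lhat$, the supremum defining $\omega_0(v)$ runs over a subset of the functions in the supremum defining $\omega(v)$. Hence $\omega_0(v)\leq \omega(v)=\dist(a,v)$ by Proposition~\ref{Proposition omega L}. (Alternatively, apply Proposition~\ref{Proposition upper bounds for f} directly.) For $v=a$ both sides are $0$, so from now on I fix $v\neq a$ and set $n=\dist(a,v)\geq 1$.

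For the lower bound I cannot reuse $g(z)=\dist(a,z)$, because its neighbor differences do not tend to $0$, so $g\notin\Lcerohat$. Instead I will truncate it. Define
\[
f(z)=\min\{\dist(a,z),\, n\}\quad\text{for } z\in G.
\]
This is the composition of $\dist(a,\cdot)$ with the $1$-Lipschitz map $t\mapsto \min\{t,n\}$. So $f(a)=0$, and $|f(z)-f(w)|\leq |\dist(a,z)-\dist(a,w)|\leq 1$ for $z\sim w$, which gives $\fnorm\leq 1$. Moreover $f(v)=n=\dist(a,v)$.

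It remains to check that $f\in\Lcerohat$. If $\dist(a,z)>n+1$, then every $w\in N_z$ satisfies $\dist(a,w)\geq \dist(a,z)-1>n$. Hence $f(z)=f(w)=n$, and $\max_{w\in N_z}|f(z)-f(w)|=0$. Therefore the limit in Definition~\ref{def f in L0} is $0$ (the max vanishes eventually), so $f\in\Lcerohat$. Together with $|f(v)|=\dist(a,v)$ this gives $\omega_0(v)\geq\dist(a,v)$, and combined with the upper bound, $\omega_0(v)=\dist(a,v)$. In fact $f$ has finite support modulo constants, since $f-n$ has finite support by local finiteness, so membership could also be deduced from $X\subseteq\Lcerohat$.

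There is no real obstacle here. The only point requiring care is choosing a test function in $\Lcerohat$ that still attains the value $\dist(a,v)$ at $v$, and capping the distance function at level $n$ does exactly this.
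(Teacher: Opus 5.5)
Your proof is correct and has the same structure as the paper's. The upper bound comes from $\omega_0(v)\leq\omega(v)=\dist(a,v)$, and the lower bound from an explicit test function in $\Lcerohat$ of norm at most one, vanishing at $a$ and equal to $\dist(a,v)$ at $v$. The only difference is the test function: the paper uses a finitely supported tent that rises to $\dist(a,v)$ and falls back to $0$ at distance $2\dist(a,v)$, while your truncation $\min\{\dist(a,\cdot),n\}$ is constant off a finite ball, which makes membership in $\Lcerohat$ immediate.
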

\begin{proof}
The proof follows the same lines as \cite[Lemma 3.1]{CoEa}. By Proposition \ref{Proposition omega L}, we have $\omega_0(v) \leq \omega(v) = \dist(a,v)$. Hence, it suffices to show $\dist(a,v)\leq \omega_0(v)$ for all $v \in G$.
Fix $v \in G$ and define the function
\[
f_v(w) = 
\begin{cases} 
\dist(a,w), & \text{ if } \dist(a,w) \leq \dist(a,v),\\
2 \dist(a,v)-\dist(a,w),  & \text{ if } \dist(a,v) \leq \dist(a,w) \leq 2 \dist(a,v),\\ 
0, & \text{ if } \dist(a,w) \geq 2 \dist(a,v).
\end{cases}
\]

Clearly  $f_v(a)=0$ and $f_v\in \Lcerohat$. It is straightforward to check that $\|f_v \|_{\Lhat}^{a}\leq 1$ and thus
\begin{equation*}
    f_v \in \{f \in \Lcerohat : \fnorm \leq 1 \text{ and } f(a)=0\}.
\end{equation*}
Since $f_v(v)= \dist(a,v)$ for all $v \in G\setminus \{a\}$, then 
\begin{equation*}
    \dist(a,v)=|f_v(v)|\leq \supfLcero=\omega_0(v),
\end{equation*}
which implies $\omega_0(v)= \dist(a,v)$ for all  $v \in G$.
\end{proof}

As in Theorem \ref{Prop Lhat is a functional Banach space}, we also obtain as a consequence of the above equality that $\Lcerohat$ is a functional Banach space.

\begin{theorem} \label{Prop Lcerohat is a functional Banach space}
    $\Lcerohat$ is a functional Banach space.
\end{theorem}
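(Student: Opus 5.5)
The plan mirrors the proof of Theorem~\ref{Prop Lhat is a functional Banach space}. A functional Banach space has three ingredients: it is a Banach space of complex-valued functions on $G$, no point of $G$ is a common zero of all its functions, and every point evaluation $f\mapsto f(v)$ is a bounded linear functional. I will verify these in turn for $\Lcerohat$ equipped with the restriction of $\blanknorm$.

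First, completeness. By Theorem~\ref{dense L0}, $\Lcerohat$ is the closure of $X$ in $\Lhat$, so it is a closed subspace of $\Lhat$. Since $\Lhat$ is complete, $(\Lcerohat,\blanknorm)$ is a Banach space of complex-valued functions on $G$.

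Second, no common zeros. For each $v\in G$, the characteristic function $\chi_{\{v\}}$ has finite support, so it lies in $X\subseteq\Lcerohat$, and it does not vanish at $v$. Alternatively, the constant function $1$ lies in $\Lcerohat$, because all of its neighbor differences are zero.

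Third, bounded point evaluations. Fix $v\in G$ and take $f\in\Lcerohat$. Since $\Lcerohat\subseteq\Lhat$, Proposition~\ref{Proposition upper bounds for f} applies directly and gives
\[
|f(v)|\leq \max\{1,\dist(a,v)\}\,\fnorm .
\]
So evaluation at $v$ is bounded with norm at most $\max\{1,\dist(a,v)\}$. To stay closer to the paper's framing, I can instead reach the same bound through Proposition~\ref{Proposition omega L0}. Write $f=f(a)+(f-f(a))$. The function $g=f-f(a)$ belongs to $\Lcerohat$ and satisfies $g(a)=0$ and $\|g\|_{\Lhat}^{a}\leq\fnorm$. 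If $\fnorm\neq 0$, then $g/\fnorm$ is admissible in the supremum defining $\omega_0(v)$, which gives $|g(v)|\leq \dist(a,v)\fnorm$, and hence
\[
|f(v)|\leq (1+\dist(a,v))\fnorm .
\]
If $\fnorm=0$, then $f=0$ and the bound holds trivially.

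I do not expect a real obstacle. The only point that needs care is that the norm on $\Lcerohat$ is the restricted one, so completeness comes from closedness and not from a separate argument.
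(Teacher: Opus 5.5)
Your proof is correct and follows essentially the paper's own one-line argument. Completeness comes from $\Lcerohat=\overline{X}$ being closed in $\Lhat$ (Theorem~\ref{dense L0}). Bounded point evaluations come from the equality $\omega_0(v)=\dist(a,v)$ of Proposition~\ref{Proposition omega L0}, or equivalently from Proposition~\ref{Proposition upper bounds for f}, exactly as in Theorem~\ref{Prop Lhat is a functional Banach space}.
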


%%%%%%%%%%%%%%%%%%%%%%%%%%%%%%
%%%%%%%%%%%%%%%%%%%%%%%%%%%%%%
%%%%%%%%%%%%%%%%%%%%%%%%%%%%%%

\section{Weak and strong convergence of functions}

Later in this paper we will estimate the essential norm of multiplication operators. For this, we will need to consider weakly convergent sequences in $\Lcerohat$. The following result will be useful.

\begin{prop}\label{prop: weak convergence}
Let $(f_n)$ be a sequence in $\Lhat$ (respectively, in $\Lcerohat$). Assume that there exists $C>0$ such that for any subsequence $(f_{n_k})$ of $(f_n)$ and any sequence $(\theta_k)$ in $\C$ with $|\theta_k|=1$ for each $k\in \N$ we have
\[
\left\|\sum\limits_{k=1}^{M}\theta_k f_{n_k}\right\|_{\Lhat}^{a}\leq C \quad\text{ for all }M \in \N.
\]
Then $(f_n)$ converges to $0$ weakly in $\Lhat$ (respectively, in $\Lcerohat$).
\end{prop}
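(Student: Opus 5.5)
We argue by contradiction using only a bounded linear functional and a choice of unimodular signs. Suppose $(f_n)$ does not converge to $0$ weakly. Then there exist a bounded linear functional $\Lambda$ on $\Lhat$ (respectively, on $\Lcerohat$), a number $\delta>0$ and a subsequence $(f_{n_k})$ such that
\[
|\Lambda(f_{n_k})|\geq \delta \quad \text{for all } k\in\N .
\]
We do not need any concrete description of the dual space. The argument works verbatim in either space, because the hypothesis only involves finite linear combinations of the $f_n$, and these lie in $\Lcerohat$ whenever the $f_n$ do.

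Next we choose the signs. For each $k$, let $\theta_k\in\C$ with $|\theta_k|=1$ be such that
\[
\theta_k\Lambda(f_{n_k})=|\Lambda(f_{n_k})|,
\]
that is, $\theta_k=\overline{\Lambda(f_{n_k})}/|\Lambda(f_{n_k})|$, which is well defined since $\Lambda(f_{n_k})\neq 0$. Linearity then gives, for every $M\in\N$,
\[
M\delta\leq \sum_{k=1}^{M}|\Lambda(f_{n_k})| = \Lambda\Big(\sum_{k=1}^{M}\theta_k f_{n_k}\Big)\leq \|\Lambda\|\,\Big\|\sum_{k=1}^{M}\theta_k f_{n_k}\Big\|_{\Lhat}^{a}\leq \|\Lambda\|\,C .
\]
The middle expression is a nonnegative real number, so bounding it by its modulus is legitimate. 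Letting $M\to\infty$ produces a contradiction, since $M\delta\to\infty$ while $\|\Lambda\|\,C$ is fixed. Hence $\Lambda(f_n)\to 0$ for every bounded linear functional $\Lambda$, which is exactly weak convergence of $(f_n)$ to $0$.

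The only step requiring care is the initial extraction. Failure of $\Lambda(f_n)\to0$ means that $\limsup_n|\Lambda(f_n)|>0$, so some subsequence stays bounded away from zero by a positive $\delta$. We then apply the hypothesis to that particular subsequence and to the sign sequence constructed above. The uniformity of $C$ over all subsequences and all sign choices is used precisely here, so no real obstacle is expected.
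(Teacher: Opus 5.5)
Your proof is correct and follows the paper's argument step for step. You extract a subsequence with $|\Lambda(f_{n_k})|\geq\delta$, choose unimodular $\theta_k$ so that $\theta_k\Lambda(f_{n_k})=|\Lambda(f_{n_k})|$, and then contradict the linear growth $M\delta$ with the uniform bound $\|\Lambda\|\,C$.
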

\begin{proof}
We will argue by contradiction. Suppose $(f_n)$ does not converge to $0$ weakly. That is, there exists a bounded linear functional $\ell \in (\Lhat)^{*}$ (respectively, in $(\Lcerohat)^{*}$) such that $\ell(f_n)\nrightarrow 0$. That is, there exists $\delta >0$ and a subsequence $(f_{n_k})$ of $(f_n)$ such that $|\ell(f_{n_k})|\geq \delta$ for all $k \in \N$.

For each $k \in \N$, we choose $\theta_k \in \C$ with $|\theta_k|=1$ such that $\theta_k \ell(f_{n_k})=|\ell(f_{n_k})|$. Now we define $h_k:=\theta_k f_{n_k}$ for each  $k \in \N$. Notice that for every $k \in \N$ we have
\begin{equation*}
    \ell(h_k)=\ell(\theta_k f_{n_k}) = \theta_k \ell(f_{n_k})=|\ell(f_{n_k})|.
\end{equation*}
This implies that $\ell(h_k)\geq \delta$ for all $k \in \N$. Hence, for all $M\in \N$ it follows that
\begin{equation*}
   \delta M \leq \sum\limits_{k=1}^{M}\ell(h_k) = \ell\left(\sum\limits_{k=1}^{M}h_k\right) =\ell\left(\sum\limits_{k=1}^{M}\theta_k f_{n_k}\right)\leq \|\ell\|\left\|\sum\limits_{k=1}^{M}\theta_k f_{n_k}\right\|_{\Lhat}^{a}. 
\end{equation*}

By hypothesis, there exists $C>0$ such that $\left\|\sum\limits_{k=1}^{M}\theta_k f_{n_k}\right\|_{\Lhat}^{a}\leq C$ for all $M\in \N$. Hence, it follows $\delta M \leq \|\ell\|C$ for all $M \in \N$. This is a contradiction since $\delta>0$ and thus we conclude the proof.
\end{proof}

We now have given sufficient conditions for a sequence $(f_n)$ in $\Lcerohat$ to be weakly convergent to $0$. The next result will characterize all the sequences $(f_n)$ in $\Lcerohat$ which converge strongly to $0$. To do so, we start by introducing the following definition:  

\begin{defn}\label{def: asint diminish}
Let $(f_n)$ be a sequence in $\Lcerohat$. We will say $(f_n)$ is \textit{asymptotically equidiminishing} in $\Lcerohat$ if for all $\epsilon>0$ there exists $N_\epsilon \in \N$ such that if $v\sim w$ and $\dist(a,v)>N_\epsilon$ then $|f_n(v)-f_n(w)|<\epsilon$, for all $n \in \N$.
\end{defn}

Now, we are able to characterize all the sequences $(f_n)$ in $\Lcerohat$ which converge strongly to $0$.

\begin{theorem}\label{asymptoticly equicontinuous}
Let $(f_n)$ be a sequence of functions in $\Lcerohat$. Then $(f_n)$ converges strongly to $0$ if and only if $(f_n)$ is asymptotically equidiminishing and $(f_n)$ converges to 0 pointwise.
\end{theorem}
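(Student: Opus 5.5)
For the forward direction, assume $\|f_n\|_{\Lhat}^{a}\to 0$. Pointwise convergence to $0$ is Corollary~\ref{cor:pointwise}. For asymptotic equidiminishing, fix $\epsilon>0$ and pick $n_0$ with $\|f_n\|_{\Lhat}^{a}<\epsilon$ for $n> n_0$. Then $|f_n(v)-f_n(w)|<\epsilon$ for every pair $v\sim w$ and every $n>n_0$.

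For the finitely many indices $n=1,\dots,n_0$, each $f_n$ lies in $\Lcerohat$. So there is $N_n$ with $\max_{w\in N_v}|f_n(v)-f_n(w)|<\epsilon$ whenever $\dist(a,v)>N_n$. Taking $N_\epsilon=\max\{N_1,\dots,N_{n_0}\}$ gives the required uniform bound for all $n$.

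For the converse, assume $(f_n)$ is asymptotically equidiminishing and $f_n\to 0$ pointwise. Fix $\epsilon>0$ and take $N_\epsilon$ from Definition~\ref{def: asint diminish}. The norm is $|f_n(a)|+\sup_{v\sim w}|f_n(v)-f_n(w)|$, and I would split the supremum according to where the edge lies.

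\emph{Edges with an endpoint far out.} Take an edge $v\sim w$ with $\dist(a,v)>N_\epsilon$ or $\dist(a,w)>N_\epsilon$. By symmetry of adjacency we may call that endpoint $v$. Then $|f_n(v)-f_n(w)|<\epsilon$ for all $n$ by the equidiminishing hypothesis.

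\emph{Edges near $a$.} The remaining edges have both endpoints in the ball $B=\{v:\dist(a,v)\le N_\epsilon\}$. Since $G$ is locally finite and connected, $B$ is a finite set, so there are only finitely many such edges. Pointwise convergence then gives $n_1$ such that for $n>n_1$ we have $|f_n(v)|<\epsilon/2$ for all $v\in B$. Hence $|f_n(v)-f_n(w)|<\epsilon$ on these edges, and also $|f_n(a)|<\epsilon$ since $a\in B$.

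Combining the two cases, $\|f_n\|_{\Lhat}^{a}\le 2\epsilon$ for $n>n_1$, which proves strong convergence.

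The only point needing care is the finiteness of the ball $B$, which follows from local finiteness by induction on the radius. The rest is routine, so I expect no real obstacle.
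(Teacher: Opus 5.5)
Your proof is correct and follows essentially the same route as the paper. The forward direction controls the tail indices by the norm bound and the finitely many initial indices by membership in $\Lcerohat$, and the converse splits edges into far and near ones and uses pointwise convergence on a finite ball. The only cosmetic difference is that you use the symmetry of adjacency to keep the near edges inside the ball of radius $N_\epsilon$, whereas the paper enlarges the ball to radius $N_1+1$ to accommodate the neighbor $w$.
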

\begin{proof}
First, suppose $(f_n)$ is asymptotically equidiminishing and $(f_n)$ converges to 0 pointwise.

Let $\epsilon>0$. Since $(f_n)$ is asymptotically equidiminishing, there exists $N_1$ such that if $v \sim w$ and $\dist(a,v) >N_1$, then $|f_n(v)-f_n(w)|< \frac{\epsilon}{2}$ for all $n\in \N$.

Since $(f_n)$ converges to $0$ pointwise, there exists $N_2$ such that $|f_n(v)| < \frac{\epsilon}{4}$ for all $n \geq N_2$ and for any vertex $v$ with $\dist(v,a)\leq N_1+1$ (observe that there are only finitely many such vertices).

Hence, for every vertex $v$ with $\dist(v,a)\leq N_1$ and $v \sim w$ we have
\[
|f_n(v)-f_n(w)| \leq |f_n(v)|+|f_n(w)| < \frac{\epsilon}{4} + \frac{\epsilon}{4} = \frac{\epsilon}{2}, \quad \text{ if } n \geq N_2.
\]
Hence, if $n \geq N_2$ then
\[
|f_n(v)-f_n(w)| < \frac{\epsilon}{2}, \quad \text{ for all } v \sim w.
\]
Therefore, for $n \geq N_2$,
\[
\sup_{v\sim w} |f_{n}(v)-f_{n}(w)| \leq \frac{\epsilon}{2}
\]
Therefore if  $n \geq N_2$, we have that
\begin{equation*}
    \left\|f_{n}\right\|_{\Lhat}^{a}=|f_n(a)|+\sup\limits_{v\sim w}|f_{n}(v)-f_{n}(w)| < \frac{\epsilon}{4}+\frac{\epsilon}{2}<\epsilon.
\end{equation*}
Thus, we have proved $(f_n)$ converges strongly to $0$. 

%%%%%%%%%%%%%

Now suppose $(f_n)$ converges strongly to $0$. That $(f_n)$ converges to zero pointwise follows from Corollary \ref{cor:pointwise}. Next we show that $(f_n)$ is asymptotically equidiminishing. Fix $\epsilon >0$. Since $\|f_n\|_{\Lhat}^{a} \to 0$, then there exists $N \in \N$ such that if $n\geq N$ if follows that 
\[
|f_n(a)|+\sup\limits_{v\sim w}|f_n(v)-f_n(w)|<\epsilon.
\]
Hence, if $n\geq N$ we have that $\sup\limits_{v\sim w}|f_n(v)-f_n(w)|<\epsilon$, from which it follows that $|f_n(v)-f_n(w)|<\epsilon$ for all $n\geq N$ and for all $v\sim w$.

On the other hand, notice that since for each $n < N$ the function $f_n$ is in $\Lcerohat$, we can choose $M \in \N$ such that if $\dist(a,v)>M $ and $v \sim w$, then $|f_n(v)-f_n(w)|<\epsilon$ for $n=1,2,\dots,N-1$. 

Hence we have found $M\in \N$ such that if $\dist(a,v)> M $ and $v\sim w$ then it follows that $|f_n(v)-f_n(w)|<\epsilon$ for all $n \in \N$. Therefore $(f_n)$ is asymptotically equidiminishing. 
\end{proof}

\section{Boundedness of the multiplication operators} \label{chap: Boundedness of the multiplication operators}

As usual, if $f:G \to \C$ we write $\|f\|_{\infty}:=\sup\limits_{v\in G}|f(v)|$, and we set $\hat{L}^{\infty}$ as the set
\[
\hat{L}^{\infty}:= \{f:G\to\C \, | \, \|f\|_{\infty}<\infty \}.
\]
The following definition will be useful in what follows.

\begin{defn} \label{Definition sigma psi}
For a function $\psi$ on $G$, we define $\ds \sigma_\psi := \sigmapsidef$.
\end{defn}

Observe that if $\sigma_\psi$ is finite then $\psi \in \Lcerohat$. Indeed, suppose $\sigma_\psi$ is finite, then $\dist(a,v)|\psi(v)-\psi(w)| < \sigma_{\psi} $ for any $v\sim w \in G$. Hence
\[
|\psi(v)-\psi(w)|<\frac{\sigma_\psi}{\dist(a,v)}\quad \text{ for all }v\sim w \in G,
\]
therefore
\[
    \lim\limits_{\dist(a,v)\rightarrow \infty} \sup\limits_{w\in N_v}|\psi(v)-\psi(w)|= 0
\]
and thus $\psi \in \Lcerohat$. 

On the other hand, there are functions in $\Lcerohat$ for which $\sigma_\psi$ is not finite. Indeed, it is not hard to see that if we set $\psi(v):=\sqrt{\dist(a,v)}$, then $\psi \in \Lcerohat$ but $\sigma_\psi =\infty$.

Recall that for $X$ a functional Banach space and $\psi$ a complex-valued function, a multiplication operator $M_\psi: X\to X$ is defined as $M_\psi(f)~=~\psi f$ for all $f \in X$. Observe that in order for $M_\psi$ to be well-defined, we need that $\psi f \in X$ for every $f\in X$. It is well known that this is enough for $M_\psi$ to be bounded.

% Important Lemma
\begin{lemma}{\cite[Lemma 11]{DuRoSh}} \label{Lemma psi f in lhat}
    Let $X$ be a functional Banach space on the set $\Omega$ and let $\psi$ be a complex-valued function on $\Omega$ such that $M_\psi$ maps $X$ into itself. Then $M_\psi$ is bounded on X and $|\psi(\omega)|\leq \|M_\psi\|$ for all $\omega \in \Omega$. In particular, $\psi$ is bounded.
\end{lemma}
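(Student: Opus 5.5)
The plan is to prove boundedness with the closed graph theorem, and then to get the pointwise bound $|\psi(\omega)|\leq \|M_\psi\|$ from the point evaluation functionals, which are bounded because $X$ is a functional Banach space.

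\emph{Boundedness.} Suppose $f_n\to f$ in $X$ and $\psi f_n\to g$ in $X$. Point evaluations are bounded linear functionals, so for every $\omega\in\Omega$ we have $f_n(\omega)\to f(\omega)$. Hence $\psi(\omega)f_n(\omega)\to\psi(\omega)f(\omega)$. Also $\psi(\omega)f_n(\omega)\to g(\omega)$. So $g=\psi f$ pointwise, and $g=M_\psi f$. The graph of $M_\psi$ is therefore closed, and since $X$ is Banach the closed graph theorem gives that $M_\psi$ is bounded.

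\emph{Pointwise bound.} Fix $\omega\in\Omega$ and let $k_\omega\in X^*$ be evaluation at $\omega$. Since no point is a common zero of all functions in $X$, we have $k_\omega\neq 0$. For $f\in X$,
\[
|\psi(\omega)|\,|k_\omega(f)| = |k_\omega(M_\psi f)| = |(M_\psi^* k_\omega)(f)|.
\]
This says $M_\psi^*k_\omega=\psi(\omega)k_\omega$, so $\psi(\omega)$ is an eigenvalue of $M_\psi^*$. Therefore
\[
|\psi(\omega)|\,\|k_\omega\| = \|M_\psi^*k_\omega\| \leq \|M_\psi^*\|\,\|k_\omega\| = \|M_\psi\|\,\|k_\omega\|.
\]
Dividing by $\|k_\omega\|>0$ gives $|\psi(\omega)|\leq\|M_\psi\|$. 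Taking the supremum over $\omega$ shows that $\psi$ is bounded with $\|\psi\|_\infty\leq\|M_\psi\|$.

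The only delicate point is the nonvanishing of $k_\omega$, which is exactly the second axiom of a functional Banach space (it was checked for $\Lhat$ and $\Lcerohat$ in Theorems~\ref{Prop Lhat is a functional Banach space} and~\ref{Prop Lcerohat is a functional Banach space}). An alternative to the adjoint is to iterate: $|\psi(\omega)|^n|f(\omega)|\leq\|k_\omega\|\,\|M_\psi\|^n\|f\|$ for an $f$ with $f(\omega)\neq0$, then take $n$-th roots and let $n\to\infty$.
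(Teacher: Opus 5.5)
Your proof is correct and is the standard argument behind the cited Lemma 11 of Duren--Romberg--Shields, which the paper quotes without proof. Boundedness comes from the closed graph theorem via bounded point evaluations, and $M_\psi^*k_\omega=\psi(\omega)k_\omega$ with $k_\omega\neq 0$ then gives $|\psi(\omega)|\le\|M_\psi^*\|=\|M_\psi\|$.

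One cosmetic point: your displayed identity with moduli does not by itself say $M_\psi^*k_\omega=\psi(\omega)k_\omega$; that follows from the identity $k_\omega(M_\psi f)=\psi(\omega)k_\omega(f)$ without moduli. The norm equality $\|M_\psi^*k_\omega\|=|\psi(\omega)|\,\|k_\omega\|$, which is all you actually use, follows from either version.
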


Now, we can show the boundedness of the multiplication operator $M_\psi$ on $\Lhat$ is equivalent to the boundedness of $M_\psi$ on $\Lcerohat$ and also give a characterization in terms of the quantity $\sigma_\psi$. The proof follows the same lines as in \cite{CoEa}, but we include it here since there are a few details that differ, and also some parts are simplified.

\begin{theorem}\label{3 equivalence theorem}
    Let $G$ be a graph and $\psi$ a function on $G$. Then the following are equivalent statements:
    \begin{enumerate}
        \item $M_\psi$ is bounded on $\Lhat$. \label{uno}
        \item $M_\psi$ is bounded on $\Lcerohat$. \label{dos}
        \item $\psi \in \hat{L}^{\infty}$ and $\sigma_\psi$ is finite. \label{tres}
    \end{enumerate}
\end{theorem}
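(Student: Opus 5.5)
I will prove the cycle of implications (\ref{tres}) $\Rightarrow$ (\ref{uno}), (\ref{tres}) $\Rightarrow$ (\ref{dos}), (\ref{uno}) $\Rightarrow$ (\ref{tres}) and (\ref{dos}) $\Rightarrow$ (\ref{tres}).

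\emph{Sufficiency.} Assume $\psi\in\hat{L}^\infty$ and $\sigma_\psi<\infty$. For $f\in\Lhat$ and $v\sim w$ I write
\[
\psi(v)f(v)-\psi(w)f(w)=\psi(w)\bigl(f(v)-f(w)\bigr)+f(v)\bigl(\psi(v)-\psi(w)\bigr).
\]
The first term is at most $\psisup\supfvw$. For the second, Proposition~\ref{Proposition upper bounds for f} gives $|f(v)|\le|f(a)|+\dist(a,v)\supfvw$. Hence the second term is bounded by
\[
2\psisup|f(a)|+\sigma_\psi\supfvw,
\]
where I used $|\psi(v)-\psi(w)|\le 2\psisup$. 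Adding $|\psi(a)f(a)|\le\psisup|f(a)|$ yields
\[
\|\psi f\|_{\Lhat}^a\le (3\psisup+\sigma_\psi)\fnorm,
\]
so $M_\psi$ is bounded on $\Lhat$.

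For $\Lcerohat$ I must also check that $\psi f\in\Lcerohat$ whenever $f\in\Lcerohat$. In the same splitting, $|\psi(w)||f(v)-f(w)|\le\psisup|f(v)-f(w)|\to0$. The other term is
\[
\frac{|f(v)|}{\dist(a,v)}\cdot\dist(a,v)|\psi(v)-\psi(w)|\le\sigma_\psi\,\frac{|f(v)|}{\dist(a,v)}\to 0
\]
by Proposition~\ref{fn in L0}. Since $N_v$ is finite, the maximum over $w\in N_v$ tends to $0$.

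\emph{Necessity.} If $M_\psi$ is bounded on $\Lhat$ or on $\Lcerohat$, then both are functional Banach spaces by Theorems~\ref{Prop Lhat is a functional Banach space} and~\ref{Prop Lcerohat is a functional Banach space}. Lemma~\ref{Lemma psi f in lhat} therefore gives $\psisup\le\|M_\psi\|$. It remains to bound $\sigma_\psi$. For fixed $v\ne a$ I test against the functions $f_v$ from the proof of Proposition~\ref{Proposition omega L0}. These lie in $\Lcerohat\subseteq\Lhat$, satisfy $f_v(a)=0$ and $\|f_v\|^a_{\Lhat}\le1$, and have $f_v(v)=\dist(a,v)$. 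Then for $w\sim v$,
\[
\dist(a,v)|\psi(v)-\psi(w)|=|f_v(v)||\psi(v)-\psi(w)|\le|\psi(v)f_v(v)-\psi(w)f_v(w)|+|\psi(w)||f_v(v)-f_v(w)|.
\]
The right-hand side is at most $\|M_\psi f_v\|_{\Lhat}^a+\psisup\le 2\|M_\psi\|$. Taking the supremum over $v\sim w$ gives $\sigma_\psi\le2\|M_\psi\|<\infty$, and this single argument serves both (\ref{uno}) and (\ref{dos}).

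The main obstacle is this necessity step: I need a single test function of norm at most $1$, lying in the smaller space $\Lcerohat$, that attains the value $\dist(a,v)$ at $v$. Proposition~\ref{Proposition omega L0} is exactly what supplies it. The sufficiency step in $\Lcerohat$ depends on the growth estimate of Proposition~\ref{fn in L0}, which is the only nontrivial input there.
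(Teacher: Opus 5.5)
Your proof is correct and follows essentially the same route as the paper: the same splitting of $\psi(v)f(v)-\psi(w)f(w)$, combined with Propositions~\ref{Proposition upper bounds for f} and~\ref{fn in L0}, for sufficiency, and Lemma~\ref{Lemma psi f in lhat} plus test functions attaining $\dist(a,v)$ at $v$ for necessity. The only differences are cosmetic: you obtain the explicit bound $3\psisup+\sigma_\psi$ directly instead of invoking Lemma~\ref{Lemma psi f in lhat} for boundedness, and you use the single family $f_v$ for both spaces, whereas the paper uses $\dist(a,\cdot)$ for $\Lhat$ and the supremum from Proposition~\ref{Proposition omega L0} for $\Lcerohat$.
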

\begin{proof}
    (\ref{tres}) $\implies$ (\ref{uno}). Assume $\psi \in \hat{L}^{\infty}$ and $\sigma_\psi$ is finite. We will show that $\psi f \in \Lhat$.
    Notice that for vertices $x\sim y$ we have
    \[
    |\psi(x)f(x)-\psi(y) f(y)| \leq |\psi(x)-\psi(y)| \, |f(x)|+ |\psi(y)|\, |f(x)-f(y)|.
    \]
    Now, by Proposition ~\ref{Proposition upper bounds for f}, and recalling that since $\sigma_\psi$ is finite, then $\psi \in \Lhat$, we have
    \begin{equation*}
    \begin{split}
    |\psi(x)f(x)-\psi(y) f(y)| 
    \leq & |\psi(x)-\psi(y)| \, \left( |f(a)|+\dist(a,x)\supfvw \right) + |\psi(y)| \supfvw \\
    \leq & |f(a)| \, \sup\limits_{v\sim w}|\psi(v)-\psi(w)|    +  \sigma_\psi \supfvw + \|\psi\|_\infty \supfvw,
    \end{split}
    \end{equation*}
    which implies that
    \begin{equation}\label{ineq_psi_f}
    \suppsifvw \leq |f(a)| \, \sup\limits_{v\sim w}|\psi(v)-\psi(w)| + (\sigma_\psi + \|\psi\|_\infty) \supfvw,
    \end{equation}
     Therefore, $\psi f \in \Lhat$. Finally, by Lemma ~\ref{Lemma psi f in lhat} we can conclude that $M_\psi$ is bounded on~$\Lhat$.
    
    (\ref{uno}) $\implies$ (\ref{tres}). Suppose $M_\psi$ is bounded on $\Lhat$. In particular, this means that $M_\psi$ maps $\Lhat$ into itself. Then by Lemma ~\ref{Lemma psi f in lhat}, we know that $\|\psi\|_{\infty} \leq \|M_\psi\|$ and hence $\psi \in \hat{L}^{\infty}$. It just remains to be seen that $\sigma_\psi$ is finite.
    
    Let $f(z)=\dist(a,z)$ for each $z \in G$. It is clear that $f \in \Lhat$ and $\fnorm=1$. Then for each $x\sim y$ we have
\begin{align*}
            |f(x)| \ |\psi(x)-\psi(y)|
            &=|f(x)\psi(x)-f(x)\psi(y)|\\
            &\leq |f(x)\psi(x)-f(y)\psi(y)|+|\psi(y)|\ |f(y)-f(x)|\\
            &\leq |f(x)\psi(x)-f(y)\psi(y)|+\|\psi\|_{\infty} \ |f(x)-f(y)|\\
            &\leq\|M_{\psi}f\|_{\Lhat}^{a}+\|\psi\|_{\infty}\ \supfvw\\
            &\leq \|M_{\psi}\|+\|\psi\|_{\infty}.
\end{align*}
Hence
    \begin{equation*}
        \dist(a,x)|\psi(x)-\psi(y)|\leq \|M_{\psi}\|+\|\psi\|_{\infty}\quad \text{ for all }x \sim y \in G.
    \end{equation*}
    Therefore, $\sigma_\psi$ is finite.

    (\ref{tres}) $\implies$ (\ref{dos}). Assume $\psi \in \hat{L}^{\infty}$ and $\sigma_\psi$ to be finite. For $f \in \Lcerohat$ and $x \sim y \in G$ with $x\neq a$ we have
    \begin{equation*}
        \begin{split}
    |\psi(x)f(x)-\psi(y)f(y)|
    &\leq|\psi(x)-\psi(y)| \ |f(x)|+|\psi(y)| \ |f(x)-f(y)|\\
    &\leq \frac{|f(x)|}{\dist(a,x)}\dist(a,x)|\psi(x)-\psi(y)| +\|\psi\|_{\infty} \ |f(x)-f(y)|\\
       &\leq \frac{|f(x)|}{\dist(a,x)} \sigma_\psi+\|\psi\|_{\infty} \max_{z \in N_x}\{|f(x)-f(z)|\}
        \end{split}
    \end{equation*}
Since $\psi \in \hat{L}^{\infty}$ and $\sigma_\psi$ is finite, we have by Proposition \ref{fn in L0} and Definition \ref{def f in L0} respectively, that 
    \begin{equation*}
        \frac{|f(x)|}{\dist(a,x)}\sigma_\psi \to 0 \quad \text{ and } \quad     
         \|\psi\|_{\infty}\max_{z \in N_x}\{|f(x)-f(z)|\} \to 0 \quad \text{ as } \dist(a,x) \to \infty.
    \end{equation*}
    Hence
    \begin{equation*}
|\psi(x)f(x)-\psi(y)f(y)| \leq \frac{|f(x)|}{\dist(a,x)} \sigma_\psi+\|\psi\|_{\infty} \max_{z \in N_x}\{|f(x)-f(z)|\}\to 0 \quad \text{ as } \dist(a,x) \to \infty
    \end{equation*}
    which implies that $\psi f \in \Lcerohat$. Since we have shown that (\ref{tres}) $\implies$ (\ref{uno}), we know that $M_\psi$ is bounded on $\Lhat$, and thus the boundedness of $M_\psi$ on $\Lcerohat$ follows from $M_\psi(\Lcerohat) \subseteq \Lcerohat$.
    
    (\ref{dos}) $\implies$ (\ref{tres}). First of all, since $\Lcerohat$ is a functional Banach space, and $M_\psi$ is bounded on $\Lcerohat$, Lemma \ref{Lemma psi f in lhat} implies that $\psi \in \hat{L}^{\infty}$. Secondly, as we did in the proof of (\ref{uno}) $\implies$ (\ref{tres}), for any function $f\in \Lcerohat$ with $f(a)=0$ and $\fnorm \leq 1$, and for every $x \sim y$ we have
\[
|f(x)| \, |\psi(x)-\psi(y)| \leq \|M_{\psi}\|+\|\psi\|_{\infty}.
\]
If we take the supremum over all such functions $f$, by Proposition ~\ref{Proposition omega L0}, we have
  \[
\dist(a,x) |\psi(x)-\psi(y)| \leq \|M_{\psi}\|+\|\psi\|_{\infty}
\]  
and hence $\sigma_\psi \leq \|M_{\psi}\| + \|\psi\|_{\infty}$.
\end{proof}

As was shown in \cite{CoEa}, observe that there are functions $\psi$ not in $\hat{L}^{\infty}$ for which $\sigma_\psi$ is finite, and functions $\psi$ in $\hat{L}^{\infty}$ for which $\sigma_\psi$ is infinite.

%%%%%%%%%%%%%%%%%%%%%%%%%%%%%%%%

We now provide some estimates for the norm of the multiplication operator on $\Lhat$ and $\Lcerohat$.

\begin{theorem}\label{prop bounds for the operator}
    If $M_\psi$ is a bounded multiplication operator on $\Lhat$ (or equivalently, on $\Lcerohat$) then
    \[
    \max\{ \psinorm, \psisup\} \leq \|M_\psi\|\leq \psisup + \sigma_\psi.
    \]
\end{theorem}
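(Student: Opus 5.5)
The lower bound has two parts. First, $\psisup \leq \|M_\psi\|$ follows directly from Lemma~\ref{Lemma psi f in lhat}, since both $\Lhat$ and $\Lcerohat$ are functional Banach spaces (Theorems~\ref{Prop Lhat is a functional Banach space} and \ref{Prop Lcerohat is a functional Banach space}). Second, for $\psinorm \leq \|M_\psi\|$ I will apply $M_\psi$ to the constant function $\mathbf{1}$. This function lies in $\Lcerohat$, because its differences are identically zero, and it lies in $\Lhat$ as well. Its norm is $\|\mathbf{1}\|_{\Lhat}^{a} = 1 + 0 = 1$. Hence
\[
\psinorm = \|M_\psi \mathbf{1}\|_{\Lhat}^{a} \leq \|M_\psi\|\,\|\mathbf{1}\|_{\Lhat}^{a} = \|M_\psi\|,
\]
and this works on either space. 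Together the two parts give the left inequality.

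For the upper bound, take $f$ with $\fnorm \leq 1$. The pointwise term is immediate:
\[
|\psi(a)f(a)| \leq \psisup\,|f(a)|.
\]
For the difference term, fix $x\sim y$ and split as in the proof of Theorem~\ref{3 equivalence theorem}:
\[
|\psi(x)f(x)-\psi(y)f(y)| \leq |\psi(x)-\psi(y)|\,|f(x)| + |\psi(y)|\,|f(x)-f(y)|.
\]
The second term is at most $\psisup \supfvw$. For the first term I will use Proposition~\ref{Proposition upper bounds for f}, $|f(x)| \leq |f(a)| + \dist(a,x)\supfvw$, which gives
\[
|\psi(x)-\psi(y)|\,|f(x)| \leq |f(a)|\,|\psi(x)-\psi(y)| + \sigma_\psi \supfvw.
\]
The leftover term $|f(a)|\,|\psi(x)-\psi(y)|$ is the main obstacle, since it does not obviously fit inside the bound $\psisup+\sigma_\psi$.

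I will handle it by splitting according to whether $x = a$. If $x\neq a$, then $\dist(a,x)\geq 1$, so $|\psi(x)-\psi(y)| \leq \dist(a,x)|\psi(x)-\psi(y)|$. If instead $x = a$, I swap the roles of $x$ and $y$ in the splitting, writing the first term as $|\psi(x)-\psi(y)|\,|f(y)|$ and the second as $|\psi(x)|\,|f(x)-f(y)|$; now $y\neq a$ and $\dist(a,y)=1$, so the same reasoning applies. Either way, with $z$ denoting the vertex of the pair that is not $a$, the first term is at most
\[
|\psi(x)-\psi(y)|\,\bigl(|f(a)| + \dist(a,z)\supfvw\bigr) \leq \dist(a,z)\,|\psi(x)-\psi(y)|\,\bigl(|f(a)| + \supfvw\bigr) \leq \sigma_\psi \fnorm,
\]
where the middle step uses $\dist(a,z)\geq 1$. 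The supremum in $\sigma_\psi$ runs over ordered pairs, so it covers whichever endpoint is $z$.

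Combining the pieces,
\[
\|\psi f\|_{\Lhat}^{a} \leq \psisup|f(a)| + \sigma_\psi\fnorm + \psisup\supfvw = (\psisup+\sigma_\psi)\fnorm.
\]
Taking the supremum over the unit ball gives $\|M_\psi\| \leq \psisup + \sigma_\psi$. The argument is identical on $\Lcerohat$, since it is a subspace of $\Lhat$ carrying the same norm.
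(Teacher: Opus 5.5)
Your proposal is correct and follows essentially the same route as the paper. The lower bound comes from $M_\psi\mathbf{1}=\psi$ together with Lemma~\ref{Lemma psi f in lhat}, which is the argument the paper omits and defers to Colonna--Easley. The upper bound uses the splitting behind inequality \eqref{ineq_psi_f}, and your case analysis on the endpoint different from $a$ simply justifies the paper's one-line remark that $\sup_{v\sim w}|\psi(v)-\psi(w)|\le\sigma_\psi$. The swap of roles when $x=a$ is harmless but unnecessary, since then $|\psi(a)-\psi(y)|=\dist(a,y)\,|\psi(y)-\psi(a)|\le\sigma_\psi$ directly.
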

\begin{proof}
The proof for the lower estimate is identical to \cite[Theorem 4.1]{CoEa}, so we omit it. To show that $\|M_\psi\|\leq \psisup + \sigma_\psi$, notice that for $f \in \Lhat$, inequality \eqref{ineq_psi_f} in Theorem~\ref{3 equivalence theorem} implies that
\begin{equation*}
    \begin{split}
    \|M_\psi f\|_{\Lhat}^{a}&=|\psi(a)f(a)|+\suppsifvw\\
        &\leq |\psi(a)| \, |f(a)| +|f(a)|\sup\limits_{v\sim w}|\psi(v)-\psi(w)|+(\sigma_\psi + \|\psi\|_{\infty})\supfvw.
    \end{split}
\end{equation*}
Notice that $|\psi(a)|\leq \psisup$ and $\sup\limits_{v\sim w}|\psi(v)-\psi(w)|\leq \sigma_\psi$ which implies that 
\begin{equation*}
    \|M_\psi f\|_{\Lhat}^{a} \leq|f(a)|(\sigma_\psi+\psisup)+(\sigma_\psi + \|\psi\|_{\infty})\supfvw
    =(\sigma_\psi + \|\psi\|_{\infty})\, \fnorm,
\end{equation*}
which shows that $\|M_\psi\|\leq \sigma_\psi + \|\psi\|_{\infty}$. The same argument applies to $f \in \Lcerohat$.
\end{proof}

The estimates above are sharp. Take $\psi$ to be a constant function to see that the lower inequality holds, and to see that the upper estimate is sharp, take $\psi$ to be the characteristic function of $a$. See \cite{CoEa} for details.

%%%%%%%%%%%%%%%%%%%%%%%%%%%%%%%%%%%%%%

\section{The spectrum of a bounded multiplication operator}

The proof of the following theorem follows the same lines as \cite{CoEa}. We offer a sketch of the proof for completeness.

\begin{theorem} \label{spectrum proposition}
    Let $M_\psi$ be a bounded multiplication operator on $\Lhat$ or $\Lcerohat$ then:
    \begin{itemize}
    \item $\sigma_{\rm p}(M_\psi) = \psi (G)$.
        \item $\sigma(M_\psi) = \sigma_{\rm ap}(M_\psi) = \overline{\psi(G)}$.
    \end{itemize}
\end{theorem}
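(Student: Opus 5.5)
The plan has three parts: compute the point spectrum, then show $\overline{\psi(G)}\subseteq\sigma_{\rm ap}(M_\psi)$, then show the resolvent set contains $\C\setminus\overline{\psi(G)}$. Combined with $\sigma_{\rm p}\subseteq\sigma_{\rm ap}\subseteq\sigma$ and the closedness of $\sigma$, these give both statements.

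\emph{Point spectrum.} For $v\in G$, the characteristic function $\chi_v$ has finite support, so by Theorem~\ref{dense L0} it lies in $\Lcerohat\subseteq\Lhat$. Since $(M_\psi-\psi(v))\chi_v=0$, we get $\psi(v)\in\sigma_{\rm p}(M_\psi)$. Conversely, suppose $\psi f=\lambda f$ with $f\neq 0$, and pick $v$ with $f(v)\neq 0$. Evaluating at $v$ gives $\lambda=\psi(v)$. Hence $\sigma_{\rm p}(M_\psi)=\psi(G)$ on both spaces.

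\emph{Approximate spectrum.} Eigenvalues are approximate eigenvalues and $\sigma_{\rm ap}$ is closed, so $\overline{\psi(G)}\subseteq\sigma_{\rm ap}(M_\psi)\subseteq\sigma(M_\psi)$. Concretely, if $\psi(v_n)\to\lambda$, the normalized functions $\chi_{v_n}/\|\chi_{v_n}\|_{\Lhat}^{a}$ satisfy
\[
\|(M_\psi-\lambda)\chi_{v_n}\|/\|\chi_{v_n}\|=|\psi(v_n)-\lambda|\to 0.
\]

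\emph{Resolvent set.} This is the main step. Let $\lambda\notin\overline{\psi(G)}$, so there is $\delta>0$ with $|\psi(v)-\lambda|\geq\delta$ for all $v$. Set $\phi=1/(\psi-\lambda)$; we must show $M_\phi$ is bounded, since then $M_\phi$ inverts $M_\psi-\lambda$. By Theorem~\ref{3 equivalence theorem} it suffices to check condition (\ref{tres}) for $\phi$.
\begin{itemize}
\item Boundedness: $\|\phi\|_\infty\leq 1/\delta$.
\item Finiteness of $\sigma_\phi$: for $v\sim w$,
\[
\dist(a,v)|\phi(v)-\phi(w)|=\frac{\dist(a,v)|\psi(v)-\psi(w)|}{|\psi(v)-\lambda|\,|\psi(w)-\lambda|}\leq\frac{\sigma_\psi}{\delta^2},
\]
so $\sigma_\phi\leq\sigma_\psi/\delta^2<\infty$.
\end{itemize}
Here $\sigma_\psi<\infty$ because $M_\psi$ is bounded, again by Theorem~\ref{3 equivalence theorem}. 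Therefore $M_\phi$ is bounded on both $\Lhat$ and $\Lcerohat$, and $M_\phi(M_\psi-\lambda I)=(M_\psi-\lambda I)M_\phi=I$. So $\lambda\notin\sigma(M_\psi)$.

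Putting the three parts together gives $\sigma(M_\psi)=\sigma_{\rm ap}(M_\psi)=\overline{\psi(G)}$. The only real work is the resolvent step, and the paper's characterization (\ref{tres}) reduces it to the one-line estimate above.
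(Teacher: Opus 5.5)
Your proof is correct and follows essentially the same route as the paper. The paper gets $\overline{\psi(G)}$ inside the spectrum from the point spectrum and closedness. It inverts $M_\psi-\lambda$ by $M_{1/(\psi-\lambda)}$, using $\sigma_{1/(\psi-\lambda)}\le\sigma_\psi/\delta^2$ together with Theorem~\ref{3 equivalence theorem}. You additionally write out the point-spectrum argument the paper omits, and you treat $\Lhat$ and $\Lcerohat$ simultaneously instead of reducing to $\Lhat$.
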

\begin{proof}
By Theorem \ref{3 equivalence theorem}, it is enough to show the result when $M_{\psi}$ is a bounded operator on $\Lhat$.

The proof that $\sigma_{\rm p}(M_\psi) = \psi (G)$ is the same as the one in \cite{CoEa} so we omit it. Next, we will show that $\sigma(M_\psi)= \overline{\psi(G)}$. Since we know that $\psi(G)=\sigma_{\rm p}(M_\psi)$ we have $\overline{\psi(G)} \subseteq \sigma(M_\psi)$. Conversely, to show that $\sigma(M_\psi) \subseteq \overline{\psi(G)}$, consider $\lambda \in \C$ such that $\lambda \not\in \overline{\psi(G)}$. Then there exists $c>0$ such that $|\psi(v)-\lambda|\geq c$ for all $ v \in G.$ Thus the function $\varphi_\lambda$ defined by $\varphi_\lambda (v) = \frac{1}{\psi(v)-\lambda}$ is bounded on $G$ by $\frac{1}{c}$. From this it follows that
\[
\sigma_{\varphi_{\lambda}} \leq \frac{1}{c^{2}} \sigma_\psi.
\]
Since $M_\psi$ is bounded on $\Lhat$ then by Theorem \ref{3 equivalence theorem} we know that $\sigma_\psi$ is finite which in turns implies that $\sigma_{\varphi_{\lambda}}$ is also finite. Hence, we now have that $\sigma_{\varphi_{\lambda}}$ is finite and $\varphi_{\lambda} \in \hat{L}^{\infty}$ since $\varphi_\lambda$ is bounded by $\frac{1}{c}$. Thus, by Theorem \ref{3 equivalence theorem} the operator $M_{\varphi_{\lambda}}$ is bounded on $\Lhat$. One easily checks that $(M_{\psi}-\lambda)^{-1}=
M_{\varphi_{\lambda}}$, finishing the proof that $\sigma(M_\psi) = \overline{\psi(G)}$.  Lastly, since $\sigma_{\rm ap}(M_\psi)$ is a closed set, the theorem follows.
\end{proof}

\section{Compactness of the multiplication operators}
\label{chap: Compactness of the multiplication operators}

In this section we prove a multiplication operator is compact in $\Lhat$ if and only if it is compact in $\Lcerohat$. Furthermore, we give a complete characterization for compact multiplication operators acting on these spaces.  

\begin{prop}
A bounded multiplication operator $M_\psi$ on $\Lhat$ (respectively on $\Lcerohat$) is compact if and only if for every bounded sequence $(f_n)$ in $\Lhat$ (respectively in $\Lcerohat$) that converges to $0$ pointwise, the sequence $(\psi f_n)$ converges to 0 in norm. \label{characterization bounded operators}
\end{prop}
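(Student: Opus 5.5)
We prove both implications with a diagonal argument, using that $G$ is countable and that point evaluations are bounded (Proposition~\ref{Proposition upper bounds for f}).

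\emph{Sufficiency.} Assume the sequential condition holds, and let $(f_n)$ be a bounded sequence in $\Lhat$ (respectively $\Lcerohat$), say $\|f_n\|_{\Lhat}^a\le C$. By Proposition~\ref{Proposition upper bounds for f}, $|f_n(v)|\le \max\{1,\dist(a,v)\}\,C$ for every $v$, so each sequence $(f_n(v))$ is bounded in $\C$. We enumerate the vertices of $G$ and apply a Cantor diagonal argument. This produces a subsequence $(f_{n_k})$ converging pointwise to some function $f:G\to\C$.

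Next we check that $f$ lies in the space. For $v\sim w$ we have $|f(v)-f(w)|=\lim_k|f_{n_k}(v)-f_{n_k}(w)|\le C$ and $|f(a)|\le C$, so $f\in\Lhat$ with $\|f\|_{\Lhat}^a\le C$.

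The main obstacle is the little Lipschitz case, because a pointwise limit of functions in $\Lcerohat$ need not lie in $\Lcerohat$. We sidestep this by working with differences: the sequence $g_k:=f_{n_{k+1}}-f_{n_k}$ lies in $\Lcerohat$, is bounded by $2C$, and converges to $0$ pointwise. The hypothesis then gives $\|\psi g_k\|\to 0$.

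That alone does not make $(\psi f_{n_k})$ Cauchy, so instead we argue by contradiction. Suppose $(\psi f_{n_k})$ has no Cauchy subsequence. Then, after passing to a further subsequence, there are $\delta>0$ and indices $j_k<l_k$, both tending to infinity, with $\|\psi(f_{n_{l_k}}-f_{n_{j_k}})\|\ge\delta$. But $h_k:=f_{n_{l_k}}-f_{n_{j_k}}$ is bounded by $2C$, lies in the space, and tends to $0$ pointwise, since both terms converge to $f(v)$. The hypothesis gives $\|\psi h_k\|\to0$, a contradiction. Hence some subsequence of $(\psi f_{n_k})$ is Cauchy and therefore converges by completeness: $\Lhat$ is complete, and $\Lcerohat$ is closed in it by Theorem~\ref{dense L0}. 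So $M_\psi$ is compact. This argument works uniformly in both spaces and never needs $f$ itself to belong to $\Lcerohat$.

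\emph{Necessity.} Assume $M_\psi$ is compact, and let $(f_n)$ be bounded with $f_n\to0$ pointwise. Suppose, for contradiction, that $\|\psi f_n\|\not\to0$. Then there are $\delta>0$ and a subsequence with $\|\psi f_{n_k}\|\ge\delta$. By compactness, a further subsequence satisfies $\psi f_{n_k}\to g$ in norm. By Corollary~\ref{cor:pointwise}, norm convergence implies pointwise convergence, so $\psi(v)f_{n_k}(v)\to g(v)$ for every $v$. On the other hand $\psi(v)f_{n_k}(v)\to0$, since $f_n\to0$ pointwise. Hence $g=0$, which contradicts $\|g\|\ge\delta$.
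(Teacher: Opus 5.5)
Your proof is correct. Your necessity argument is the paper's: the paper phrases it as ``$0$ is the unique limit point of $(\psi f_n)$,'' and you phrase it by contradiction.

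Sufficiency is where you genuinely differ. The paper extracts a pointwise convergent subsequence $g_{n_k}\to g$, checks that $g\in\Lhat$, and applies the hypothesis to the single sequence $g_{n_k}-g$. This identifies the limit directly as $\psi g_{n_k}\to\psi g$. The paper then asserts that the $\Lcerohat$ case ``follows the same argument.'' Taken literally it does not, for exactly the reason you flag: $g$ need not lie in $\Lcerohat$. For example, $\min\{\dist(a,\cdot),n\}\in\Lcerohat$ has norm $1$ and converges pointwise to $\dist(a,\cdot)\notin\Lcerohat$. So $g_{n_k}-g$ is not an admissible test sequence for the $\Lcerohat$ hypothesis.

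Your route applies the hypothesis only to differences $f_{n_{l_k}}-f_{n_{j_k}}$ of terms of the subsequence, which always stay in the space. You then recover the limit from completeness of $\Lhat$ together with closedness of $\Lcerohat$ (Theorem~\ref{dense L0}). It therefore handles both spaces uniformly, at the small cost of not naming the limit explicitly as the paper's version does in the $\Lhat$ case.

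Some tidying is possible. Your argument actually shows that $(\psi f_{n_k})$ itself is Cauchy: failure of the Cauchy property directly yields $\delta$ and indices $j_k<l_k$ with $j_k\ge k$. So you can drop the ``no Cauchy subsequence'' framing, the detour through $f_{n_{k+1}}-f_{n_k}$, and the verification that the pointwise limit lies in $\Lhat$. Only pointwise convergence of $(f_{n_k})$ is used.
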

\begin{proof}
We prove the result for bounded multiplication operators acting on $\Lhat$. The proof for $M_\psi$ acting on $\Lcerohat$ follows the same argument.

Assume that $M_\psi$ is compact on $\Lhat$ (the proof of this part is essentially the same as in \cite[Lemma 7.1]{CoEa}, and we include a sketch of it for completeness). Take $(f_n)$ a bounded sequence in $\Lhat$ converging to 0 pointwise. We may assume $\|f_n\|_{\Lhat}^{a} \leq 1$ for all $n \in \N$. Since $M_\psi$ is compact the sequence $(f_n)$ has a subsequence $(f_{n_k})$ such that $(\psi f_{n_k})$ converges in norm to some function $f \in \Lhat$. Now observe that since $\|\psi f_{n_k} - f\|_{\Lhat}^{a}\to 0$,  Proposition \ref{Proposition upper bounds for f} implies that $\psi f_{n_k} \to f$ pointwise. Given that $f_n \to 0$ pointwise then $\psi f_{n_k}\to 0$ pointwise, which implies that $f$ is the zero function and therefore $\|\psi f_{n_k}\|_{\Lhat}^{a}\to 0$.  The argument above shows that in fact $0$ is the unique limit point in $\Lhat$ for the sequence $(\psi f_n)$.  Since $M_\psi$ is a compact operator, and the sequence $(\psi f_n)$ has a unique limit point, it follows that $\|\psi f_{n}\|_{\Lhat}^{a}\to 0$, as desired.

Conversely, suppose that for every bounded sequence $(f_n)$ in $\Lhat$ converging to $0$ pointwise, the sequence $\|\psi f_n\|_{\Lhat}^{a}$ converges to $0$. We will show that $M_\psi$ is compact: to do this, let $(g_n)$ be a sequence in $\Lhat$ with $\|g_n\|_{\Lhat}^{a} \leq 1$ for all $n \in \N$. Then by Proposition \ref{Proposition upper bounds for f} it follows that $|g_n(v)|\leq \dist(a,v)$ for each $v \in G$ and for all $n \in \N$; that is, $(g_n)$ is bounded pointwise. From \cite[Theorem 1.4]{ReSi}, there exists a subsequence $(g_{n_k})$ of $(g_n)$ that converges pointwise to a function $g$. 

To see that $g \in \Lhat$, let $v \sim w \in G$ and observe that we can find $N_v,N_w \in \N$ such that if $k \geq N_v$ and $k \geq N_w$ then $|g(v)-g_{n_k}(v)|<1$ and $|g_{n_k}(w)-g(w)|<1$. Let $M=\max\{N_v,N_w\}$. Then
\begin{align*}
    |g(v)-g(w)|&\leq |g(v)-g_{n_M}(v)|+|g_{n_M}(w)-g(w)|+|g_{n_M}(v)-g_{n_M}(w)|\\
    &\leq |g(v)-g_{n_M}(v)|+|g_{n_M}(w)-g(w)| + \|g_n\|_{\Lhat}^{a}\\
    &< 3.
\end{align*}
This implies that $\supgvw<\infty$ and thus $g \in \Lhat$. Furthermore, since the sequence $(g_n)$ is bounded in $\Lhat$ we can now see that the sequence $(f_k)$ defined as $f_k =g_{n_k}-g$ for all $k \in \N$ is also bounded in $\Lhat$. Moreover, we know that the sequence $(f_k)$ converges to $0$ pointwise.  Hence, by hypothesis we have that $\|\psi f_k\|_{\Lhat}^{a} \to  0$ as $n \to \infty$; that is, $\psi g_{n_k}$ converges to $\psi g$ in norm. Since this proves that $M_\psi$ transforms bounded sequences to sequences which have a convergent subsequence, the operator $M_\psi$ is compact.
\end{proof}

One can show from the above proposition that if $\psi$ has finite support then $M_\psi$ is a compact operator. On the other hand, we can also show from the previous proposition that if $\psi$ is a constant function different to the zero function then $M_\psi$ is not a compact operator. This will follow more easily from Theorem \ref{3 equiv compact operator} below.

Now, similarly to Theorem \ref{3 equivalence theorem}, we can show the compactness of the multiplication operators $M_\psi$ on $\Lhat$ is equivalent to the compactness of $M_\psi$ on $\Lcerohat$ and also give a characterization in terms of two quantities which depend on $\psi$. Again, the proof follows the same lines as in \cite[Theorem 7.2]{CoEa}, but it differs in a few details.

\begin{theorem}\label{3 equiv compact operator}
    Let $M_\psi$ be a bounded multiplication operator on $\Lhat$ (or equivalently on $\Lcerohat$). Then the following three statements are equivalent:
    \begin{enumerate}
        \item $M_\psi$ is compact on $\Lhat$
        \item $M_\psi$ is compact on $\Lcerohat$
        \item The following conditions hold:
      \end{enumerate}
      \begin{equation*}
\lim\limits_{\dist(a,v)\to\infty}\psi(v)=0 \quad \text{ and } \quad \lim\limits_{\dist(a,v)\to\infty}\dist(a,v)\max\limits_{w \in N_{v}} | \psi (v)-\psi(w)|=0 
      \end{equation*}
\end{theorem}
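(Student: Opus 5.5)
I will prove the chain (1)$\Rightarrow$(3)$\Rightarrow$(1), then (2)$\Rightarrow$(3)$\Rightarrow$(2), using Proposition~\ref{characterization bounded operators} throughout.

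\emph{(3)$\Rightarrow$(1) and (3)$\Rightarrow$(2).} Take a bounded sequence $(f_n)$ in $\Lhat$ (resp.\ $\Lcerohat$) with $\|f_n\|_{\Lhat}^{a}\le 1$ that converges to $0$ pointwise. By Proposition~\ref{characterization bounded operators} it suffices to show $\|\psi f_n\|_{\Lhat}^{a}\to 0$. The term $|\psi(a)f_n(a)|$ tends to $0$ trivially. For $x\sim y$ I split, as in Theorem~\ref{3 equivalence theorem},
\[
|\psi(x)f_n(x)-\psi(y)f_n(y)|\le |f_n(x)|\,|\psi(x)-\psi(y)|+|\psi(y)|\,|f_n(x)-f_n(y)|.
\]
Given $\epsilon>0$, I choose $R$ so that $\dist(a,x)\max_{w\in N_x}|\psi(x)-\psi(w)|<\epsilon$ and $|\psi(x)|<\epsilon$ whenever $\dist(a,x)\ge R$.

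For $\dist(a,x)\ge R+1$ (so that also $\dist(a,y)\ge R$), Proposition~\ref{Proposition upper bounds for f} gives $|f_n(x)|\le \dist(a,x)$. Hence the first term is $<\epsilon$. The second term is $\le\epsilon\cdot 1$.

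For $\dist(a,x)\le R+1$ there are finitely many pairs $x\sim y$, by local finiteness. On these, pointwise convergence gives $|f_n(x)|,|f_n(y)|\to 0$ uniformly. Both terms are bounded by $(\sigma_\psi+\|\psi\|_\infty)(|f_n(x)|+|f_n(y)|)$-type quantities and so tend to $0$.

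Thus $\limsup_n\|\psi f_n\|_{\Lhat}^{a}\le 2\epsilon$. The same argument works verbatim in $\Lcerohat$.

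\emph{(1)$\Rightarrow$(3) and (2)$\Rightarrow$(3).} I argue by contrapositive with explicit test sequences. Because every test function I use will lie in $\Lcerohat$, the same sequences serve for both (1) and (2).

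First suppose $\psi(v)\not\to 0$. Then there are $v_n$ with $\dist(a,v_n)\to\infty$ and $|\psi(v_n)|\ge\delta$. Passing to a subsequence, I can assume the $v_n$ are distinct. Let $f_n=\chi_{\{v_n\}}$, which has norm at most $2$ (at most $1$ once $v_n\ne a$) and tends to $0$ pointwise. Yet $\|\psi f_n\|_{\Lhat}^{a}\ge |\psi(v_n)-0|$, evaluated across any edge at $v_n$, so it is at least $\delta$. This contradicts Proposition~\ref{characterization bounded operators}.

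Next suppose the second limit fails: there are $v_n\sim w_n$ with $d_n:=\dist(a,v_n)\to\infty$ and $d_n|\psi(v_n)-\psi(w_n)|\ge\delta$. For the test function I use the tent functions $f_{v}$ from Proposition~\ref{Proposition omega L0}, normalized so that they vanish near $a$. Specifically, I take
\[
g_n(z)=\max\{0,\ \min\{\dist(a,z)-\lfloor d_n/2\rfloor,\ \ldots\}\}-\text{type tent supported on } d_n/2\le \dist(a,z)\le 2d_n,
\]
with slope $\pm1$ and peak of height about $d_n/2$ at distance $d_n$. More simply, I take $g_n(z)=\max\{0,\, d_n/2-|\dist(a,z)-d_n|\}$, so that $g_n(v_n)=d_n/2$. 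These functions lie in $\Lcerohat$ (finite support), have norm at most $1$, and tend to $0$ pointwise since their supports escape to infinity. Then, using $|g_n(v_n)-g_n(w_n)|\le1$,
\[
|\psi(v_n)g_n(v_n)-\psi(w_n)g_n(w_n)|\ge g_n(v_n)|\psi(v_n)-\psi(w_n)|-\|\psi\|_\infty|g_n(v_n)-g_n(w_n)|.
\]
This gives the lower bound $\delta/2-\|\psi\|_\infty$, which is the obstacle: the $\|\psi\|_\infty$ term spoils it.

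To fix this, I will use that the first condition $\psi\to 0$ has already been established under compactness. Hence $|\psi(w_n)|\to 0$, and the correct estimate is
\[
|\psi(v_n)g_n(v_n)-\psi(w_n)g_n(w_n)|\ge g_n(v_n)|\psi(v_n)-\psi(w_n)|-|\psi(w_n)|\,|g_n(v_n)-g_n(w_n)|\ge \delta/2-|\psi(w_n)|.
\]
This is eventually at least $\delta/4$, so $\|\psi g_n\|_{\Lhat}^{a}\not\to 0$, contradicting compactness via Proposition~\ref{characterization bounded operators}. The one delicate point to check is that $g_n$ is $1$-Lipschitz, which holds because $\dist(a,\cdot)$ changes by at most $1$ along edges.
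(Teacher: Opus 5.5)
Your proposal is correct and follows the paper's proof. It uses the same reduction to Proposition~\ref{characterization bounded operators}, the same test functions $\chi_{\{v_n\}}$ to force $\psi(v_n)\to 0$, and the same split for (3)$\Rightarrow$(1),(2) into a finite ball and a tail where both quantities in (3) are small. The one real difference is the test function for the second condition. The paper uses a slope-$2$ ramp that equals $\dist(a,v_n)$ identically on $\{v_n\}\cup N_{v_n}$, so the cross term $\psi(w)\bigl(g_n(v_n)-g_n(w)\bigr)$ vanishes and the second limit follows independently of the first. Your finitely supported slope-$1$ tent leaves a cross term bounded by $|\psi(w_n)|$, which you correctly dispose of using the already established fact that $\psi\to 0$.
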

\begin{proof}
We will only show the equivalence $(1) \iff (3)$, since the equivalence $(2) \iff (3)$ is similar.

$(1)\implies (3)$. Assume $M_\psi$ is compact on $\Lhat$. It suffices to show that given a sequence of vertices $(v_n)$ in $G$  with $v_n\neq a$ for all $n\in \N$ such that $\dist(a,v_n)\to \infty$ as $n\to \infty$, it follows that
\begin{equation}\label{compact_nec_suff}
\lim\limits_{n\to \infty} \psi(v_n)=0 \quad \text{ and } \quad \lim\limits_{n\to \infty} \dist(a,v_n)\max\limits_{w\in N_{v_n}} | \psi (v_n)-\psi(w)|=0.
\end{equation}
Let $(v_n)$ be such a sequence and for each $n \in \N$ let $f_n$ be characteristic function of the vertex $v_n$. It is clear that $f_n \to 0$ pointwise and $\|f_n\|_{\Lhat}=1$ for all $n \in \N$. By Proposition \ref{characterization bounded operators} it follows that $\|\psi f_n\|_{\Lhat}^{a} \to 0$ as $n \to \infty$. But notice that 
\[
\|\psi f_n\|_{\Lhat}^{a}=|\psi(a) f_n(a)|+ \sup\limits_{v\sim w}|\psi(v)f_n(v)-\psi(w)f_n(w)|=|\psi(v_n)|,
\]
hence it follows that $\lim\limits_{n\to \infty} \psi(v_n)=0$ and thus the first part of condition \eqref{compact_nec_suff} holds. Now, for each $n \in \N$, let us define the function $g_n : G \to \C$ as
\[g_n(v) =  \begin{cases} 
0, \quad &\text{ if } \quad   \dist(a,v) < \left\lfloor\frac{\dist(a,v_n)}{2} \right\rfloor, \\ 
2\dist(a,v)-\dist(a,v_n)+2, \quad &\text{ if }\quad \left\lfloor\frac{\dist(a,v_n)}{2} \right\rfloor  \leq \dist(a,v) <\dist(a,v_n), \\ 
\dist(a,v_n), \quad &\text{ if }\quad \dist(a,v) \geq \dist(a,v_n). \\ \end{cases}
\]
Observe that $g_n \to 0$ pointwise. Also, it is straightforward to check that $\|g_n\|_{\Lhat}^{a} \leq 2$ for all $n \in \N$. Therefore, by Proposition \ref{characterization bounded operators} we obtain that $\|  \psi g_n \|_{\Lhat}^{a} \to 0$. 

Let $n \in \N$ fixed. Observe that if $w\sim v_n$, and $\dist(a,w) \geq \dist(a,v_n)$, we have $g_n(w) = \dist(a,v_n) = g_n(v_n)$. On the other hand, if $w\sim v_n$, and $\dist(a,w) < \dist(a,v_n)$, then $\dist(a,w)= \dist(a,v_n)-1$ and hence
\[
    g_n(w) = 2(\dist(a,v_n)-1)-\dist(a,v_n)+2 = \dist(a,v_n) = g_n(v_n).
\]
Hence, if $w\sim v_n$ it follows that $g_n(w)=g_n(v_n)=\dist(a,v_n)$ and thus we can see that
\[
\|\psi g_n\|_{\Lhat}^{a} \geq \max\limits_{w\in N_{v_n}} |\psi(v_n)g_n(v_n) - \psi (w)g_n(w)|\\
= \dist(a,v_n)\max\limits_{w\in N_{v_n}} |\psi(v_n)- \psi (w)|
\]
for all $n\in \N$. Therefore, since $\|\psi g_n\|_{\Lhat}^{a} \to 0$ then it follows that $\lim\limits_{n\to \infty} \dist(a,v_n)\max\limits_{w\in N_{v_n}} | \psi (v_n)-\psi(w)|=0$ and thus both conditions in condition \eqref{compact_nec_suff} have been shown.

$(3)\implies (1)$ Assume that both conditions of $(3)$ hold. Since $(1)$ is trivially true if $\psi$ is identically zero, we may assume that $\psi$ is not the zero function.  By Proposition \ref{characterization bounded operators}, to prove that $M_\psi$ is compact it suffices to show that if $(f_n)$ is a sequence in $\Lhat$ converging to 0 pointwise and 
$0 \neq \sup\limits_{n \in \N}\|f_n\|_{\Lhat}^{a}<\infty$ then $\|\psi f_n\|_{\Lhat}^{a} \to 0 \text{ as } n \to \infty.$

Let $(f_n)$ be such a sequence and fix an $\epsilon>0$. Since $f_n \to 0$ pointwise and $\psi \in \Lhat$, there exists $N \in \N$ such that if $n \geq N$ then
\begin{equation}\label{seis uno}
|f_n(a)|<\frac{\epsilon}{3\psinorm}.
\end{equation}
By $(3)$ we know that there exists $M>0$ such that if $\dist(a,v)\geq M$ then
\begin{equation}\label{psi_small}
|\psi(v)|<\frac{\epsilon}{3\sup\limits_{n \in \N}\|f_n\|_{\Lhat}^{a}} \quad \text{ and } \quad 
\dist(a,v)\max\limits_{w \in N_{v}} |\psi (v)-\psi(w)|<\frac{\epsilon}{3\sup\limits_{n \in \N}\|f_n\|_{\Lhat}^{a}}.
\end{equation}
Now observe that for every $v\in G$,
\begin{equation*}
    \begin{split}
        \max\limits_{w \in N_{v}} | \psi (v)f_n(v)-\psi(w)f_n(w)|&\leq |f_n(v)| \max\limits_{w \in N_{v}} | \psi (v)-\psi(w)|\\ &\hspace{.6cm} +\max\limits_{w \in N_{v}}|\psi(w)| \max\limits_{w \in N_{v}} | f_n(v)-f_n(w)|\\ 
    \end{split}
\end{equation*}
and by Proposition \ref{Proposition upper bounds for f} it follows that
\begin{align*}
        \max\limits_{w \in N_{v}} | \psi (v)f_n(v)-\psi(w)f_n(w)| &\leq(|f_n(a)|+\dist(a,v)\sup\limits_{x \sim y} |f_n(x)-f_n(y)|)
        \max\limits_{w \in N_{v}} |\psi(v)-\psi(w)|\\
        &\hspace{1cm}+\max\limits_{w \in N_{v}}|\psi(w)| \max\limits_{w \in N_{v}} | f_n(v)-f_n(w)|\\
        &= |f_n(a)|\max\limits_{w \in N_{v}} |\psi(v)-\psi(w)|\\&\hspace{1cm}+\dist(a,v)\sup\limits_{x \sim y} |f_n(x)-f_n(y)|
        \max\limits_{w \in N_{v}} |\psi(v)-\psi(w)|\\
        &\hspace{1cm}+\max\limits_{w \in N_{v}}|\psi(w)| \max\limits_{w \in N_{v}} | f_n(v)-f_n(w)|\\
        &\leq |f_n(a)| \psinorm + \dist(a,v) \max\limits_{w \in N_{v}} |\psi(v)-\psi(w)| \, \|f_n\|_{\Lhat}^{a}+ \max\limits_{w \in N_{v}}|\psi(w)| \, \|f_n\|_{\Lhat}^{a}.
\end{align*}

By expressions \eqref{seis uno} and \eqref{psi_small}, it now follows that if we take $n\geq N$, then for all $v\sim w \in G$ such that $\dist(a,v)> M$ then 
\begin{equation}\label{la buena}
        \max\limits_{w \in N_{v}} | \psi (v)f_n(v)-\psi(w)f_n(w)|
        < \epsilon.
\end{equation}

On the other hand, since $f_n \to 0$ pointwise in $\Lhat$ then $f_n \to 0$ uniformly on the finite set $\{v \in G: \dist(a,v) \leq M\}$. Since $\psi$ is bounded, this implies, rechoosing $N$ if necessary, that if $n \ge N$ then
\[
\max\limits_{w \in N_v} |\psi(v) f_n(v)-\psi(w) f_n(w)| < \epsilon,
\]
for $v \in G$ with $\dist(a,v) \leq M$. Therefore, together with expression \eqref{la buena} we can conclude that for all $v\in G$ and for $n\geq N$ we have that
\begin{equation}\label{la buena 2}
    \max\limits_{w \in N_{v}} | \psi (v)f_n(v)-\psi(w)f_n(w)|<\epsilon.
\end{equation}

Furthermore, given that $f_n(a)\to 0$ pointwise and together with expression \eqref{la buena 2} we have that $\|\psi f_n\|_{\Lhat}^{a} \to 0$ as $n \to \infty$, which by Proposition \ref{characterization bounded operators} concludes the proof that $M_\psi$ is compact. 
\end{proof}

The conditions (3) above are independent. Indeed, observe that the function $\psi$ defined by 
\[
\psi(v)=\begin{cases}
    1,& \text{ if } v=a,\\
\frac{\sin(\dist(a,v))}{\dist(a,v)},& \text{ if } v \neq a,
\end{cases}
\]
satisfies the first condition, but not the second one. On the other hand, the function 
\[
\psi(v)=\sum_{k=1}^{\dist(a,v)+1} \frac{1}{k^2}
\]
satisfies the second condition, but not the first one. (Note that in both cases the multiplication operator defined by the functions is bounded.)

\section{Estimates on the essential norm of a multiplication operator}
\label{chap: Estimates for the essential norm of multiplication operators}

In this section we provide estimates for the essential norm of a bounded multiplication operator acting on $\Lhat$. Recall that the \textit{essential norm} of an operator $A$ on a Banach space $X$, denoted as $\|A\|_{e}$ is the distance in the operator norm from $A$ to the set of compact operators on $X$; that is, 
\begin{equation*}
    \|A\|_{e} = \inf\{\|A-K\| : K \text{ a compact operator on } X\}.
\end{equation*}

We will now define two quantities which will be useful to bound the essential norm of $M_\psi$.

\begin{defn}
Let $\psi$ be a complex-valued function on $G$ such that the multiplication operator $M_\psi$ is bounded on $\Lhat$. We define

\begin{align*} 
A(\psi)&=\lim\limits_{n\to \infty}\sup\left\{|\psi(v)| \,:\, \dist(a,v)\geq n\right\},\\
B(\psi)&= \lim\limits_{n\to \infty}\sup\left\{\dist(a,v)|\psi(v)-\psi(w)|\mid v\sim w, \dist(a,v), \dist(a,w)\geq n\right\}.\\
\end{align*}
\end{defn}
Since $M_\psi$ is bounded, we know by Theorem \ref{3 equivalence theorem} that $\psi~\in~\Lhat^{\infty}$ and $\sigma_\psi$ are finite and the sequences of supremums are monotone decreasing which implies that the limits in the above expressions exist and are finite.

In the next proposition we will prove $A(\psi)$ is a lower bound for the essential norm of $M_\psi$.

\begin{prop}\label{norma esencial abajo}
Let $M_\psi$ be bounded on $\Lhat$ or $\Lcerohat$. Then
\begin{equation*}
    \|M_\psi\|_{e} \geq A(\psi).
\end{equation*}
\end{prop}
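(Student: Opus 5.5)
Assume $A(\psi)>0$; otherwise there is nothing to prove. Choose vertices $v_n$ with $\dist(a,v_n)\to\infty$ and $|\psi(v_n)|\to A(\psi)$; this is possible by the definition of $A(\psi)$ as a limit of decreasing suprema. Passing to a subsequence, we may also assume $\dist(a,v_{n+1})\geq \dist(a,v_n)+2$, so the $v_n$ are pairwise non-adjacent and distinct. Let $f_n=\chi_{\{v_n\}}$ be the characteristic function of $v_n$. Then $f_n\in X\subseteq\Lcerohat\subseteq\Lhat$ and $\|f_n\|_{\Lhat}^{a}=1$ (since $v_n\neq a$). The plan is to show that $f_n\to 0$ weakly, and then to use the standard fact that a compact operator maps weakly null sequences to norm null sequences.

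To get weak convergence, we apply Proposition \ref{prop: weak convergence}. For any subsequence $(f_{n_k})$ and unimodular $\theta_k$, the function $h=\sum_{k=1}^M\theta_k f_{n_k}$ vanishes at $a$ and takes values of modulus at most $1$, with disjoint supports on non-adjacent vertices. Hence for $v\sim w$ at most one of $h(v),h(w)$ is nonzero, so $|h(v)-h(w)|\leq 1$ and $\|h\|_{\Lhat}^{a}\leq 1$. Proposition \ref{prop: weak convergence} therefore gives $f_n\to 0$ weakly, in both $\Lhat$ and $\Lcerohat$.

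Now let $K$ be any compact operator on the space in question. Then $Kf_n\to 0$ in norm. Since $\|f_n\|_{\Lhat}^{a}=1$,
\[
\|M_\psi-K\|\geq \|(M_\psi-K)f_n\|_{\Lhat}^{a}\geq \|\psi f_n\|_{\Lhat}^{a}-\|Kf_n\|_{\Lhat}^{a}.
\]
We have $\|\psi f_n\|_{\Lhat}^{a}=|\psi(v_n)|$, exactly as computed in the proof of Theorem \ref{3 equiv compact operator}: $\psi f_n(a)=0$, and the only nonzero differences are $|\psi(v_n)|$ across the edges at $v_n$. Letting $n\to\infty$ gives $\|M_\psi-K\|\geq A(\psi)$, and taking the infimum over $K$ yields the claim.

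The only step needing care is the weak convergence, i.e.\ the uniform bound on the signed sums. The separation $\dist(a,v_{n+1})\geq\dist(a,v_n)+2$ is what makes it work: distinct $v_n$ are then never adjacent, so no edge difference can reach $2$.
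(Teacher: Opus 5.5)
Your proof is correct and follows essentially the same route as the paper: characteristic functions of single vertices, weak nullity via Proposition~\ref{prop: weak convergence}, the fact that a compact operator maps weakly null sequences to norm null ones, and the identity $\|\psi\chi_{\{v\}}\|_{\Lhat}^{a}=|\psi(v)|$ for $v\neq a$. The paper instead runs the argument over an enumeration of all vertices, obtaining the signed-sum bound $3$ and then using local finiteness to identify $\limsup_n|\psi(v_n)|$ with $A(\psi)$; so your separation condition $\dist(a,v_{n+1})\geq\dist(a,v_n)+2$ is a convenience rather than ``what makes it work'', since any uniform bound on the signed sums suffices.
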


\begin{proof}
Let us enumerate all the vertices of $G$ as $v_1,v_2,v_3,\dots$, with $v_1=a$, and consider the sequence $(v_n)$ in $G$. Let $\chi_{n}:G\to \{0,1\}$ denote the characteristic function for the vertex $v_n$. Clearly $\chi_n$ is in $\Lcerohat$ for each $n \in \N$.

We would like to show that the sequence $(\chi_n)$ converges to $0$ weakly. To do so, by Proposition \ref{prop: weak convergence}, it is enough to show  that for any $(\chi_{n_k})$ subsequence of $(\chi_n)$ and any sequence $(\theta_k)$ in $\C$ with $|\theta_k|=1$ for each $k\in \N$, we have
\[
\left\|\sum\limits_{k=1}^{M}\theta_k \chi_{n_k}\right\|_{\Lhat}^{a}\leq 3 \quad\text{ for all }M \in \N.
\]
Clearly, for all $M \in \N$ and for vertices $v\sim w$ we have that
\begin{equation*}
    \left|\sum\limits_{k=1}^{M}\theta_k\chi_{n_k}(v)-\sum\limits_{k=1}^{M}\theta_k\chi_{n_k}(w)\right| \leq 2.
\end{equation*}
Hence, for all $M \in \N$
\begin{equation*}
     \sup\limits_{v\sim w}\left|\sum\limits_{k=1}^{M}\theta_k\chi_{n_k}(v)-\sum\limits_{k=1}^{M}\theta_k\chi_{n_k}(w)\right|\leq 2.
\end{equation*}
Also notice that for all $M\in \N$,
\begin{equation*}
    \left|\sum\limits_{k=1}^{M}\theta_k\chi_{n_k}(a)\right|\leq 1.
\end{equation*}
Therefore, for all $M\in \N$
\begin{equation*}
    \left\|\sum\limits_{k=1}^{M}\theta_k \chi_{n_k}\right\|_{\Lhat}^{a}=\left|\sum\limits_{k=1}^{M}\theta_k\chi_{n_k}(a)\right|+ \sup\limits_{v\sim w}\left|\sum\limits_{k=1}^{M}\theta_k\chi_{n_k}(v)-\sum\limits_{k=1}^{M}\theta_k\chi_{n_k}(w)\right|\leq 3.
\end{equation*}
Hence we can conclude by Proposition \ref{prop: weak convergence} that $(\chi_n)$ converges weakly to $0$. 

Now observe that for all $n \in \N$ with $n\geq 2$ we have that
\begin{equation}\label{laequ}
    \|\chi_n\|_{\Lhat}^{a}= |\chi_n(a)|+\sup\limits_{v\sim w}|\chi_n(v)-\chi_n(w)|=1
\end{equation}
while for $n=1$ we have that
\begin{equation*}
    \|\chi_n\|_{\Lhat}^{a}= |\chi_n(a)|+\sup\limits_{v\sim w}|\chi_n(v)-\chi_n(w)|=2.
\end{equation*}
Let $K$ be a compact operator on $\Lhat$ (respectively, on $\Lcerohat$). For every fixed $n \in \N$, $n \geq 2$ we have
\begin{align*}
    \| M_\psi - K\| 
    &= \sup \{ \|(M_\psi - K)f\|_{\Lhat}^{a} \mid \|f\|_{\Lhat}^{a}=1 \} \\
    & \geq \sup \{ \|(M_\psi - K) \chi_j \|_{\Lhat}^{a} \mid j \geq n \} \\
    &\geq \sup\{\|M_\psi\chi_j \|_{\Lhat}^{a}- \|K\chi_j\|_{\Lhat}^{a} : j \geq n\}
\end{align*}
Hence we have
\[
    \|M_\psi - K\|\geq\limsup\limits_{n\to \infty}\hspace{.2cm}\left(\|M_\psi\chi_n \|_{\Lhat}^{a}- \|K\chi_n\|_{\Lhat}^{a}\right).
\]
Since $K$ is compact and $\chi_n$ converges weakly to zero, we have
$\lim\limits_{n\to \infty} \|K \chi_n\|_{\Lhat}^{a}=0$, therefore
\[
    \|M_\psi - K\|\geq\limsup\limits_{n\to \infty}\hspace{.2cm}\|M_\psi\chi_n \|_{\Lhat}^{a}.
\]
This implies that 
\begin{equation}\label{ere}
    \|M_\psi\|_{e} 
    \geq \limsup\limits_{n\to \infty}\hspace{.2cm}\|M_\psi\chi_n \|_{\Lhat}^{a}
    = \limsup\limits_{n\to \infty} \hspace{.2cm}\sup\limits_{v\sim w}|\psi(v)\chi_n(v)-\psi(w)\chi_n(w)|=\limsup\limits_{n\to \infty}|\psi(v_n)|.
\end{equation}
But
\[
\limsup\limits_{n\to \infty} \hspace{.2cm}|\psi(v_n)|\\
    =\lim\limits_{m\to \infty} \hspace{.2cm}\sup\bigg\{|\psi(v)|: \dist(a,v)\geq m\bigg\}
    =A(\psi).
\]
Hence, together with inequality \eqref{ere} we can conclude $\|M_\psi\|_{e} \geq A(\psi)$.
\end{proof}

The proof above follows the same lines as \cite[Theorem 8.2]{CoEa}, but differs in some important details. Also, in said paper, Colonna and Easley give a lower bound for the essential norm that also depends on $B(\psi)$. The proof they give does not immediately carry to our setting. We leave the problem of finding a lower bound that depends on $B(\psi)$ for future research.

Next, we will find an upper estimate for the essential norm of $M_\psi$. The proof of this theorem differs from \cite[Theorem 8.3]{CoEa}.

\begin{prop}\label{norma esencial arriba}
If $M_\psi$ is bounded on $\Lhat$ (or equivalently, $\Lcerohat$), then
\begin{equation*}
    \|M_\psi\|_{e} \leq  4\hspace{.03cm}A(\psi) + B(\psi).
\end{equation*}
\end{prop}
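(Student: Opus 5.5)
We approximate $M_\psi$ by compact multiplication operators with truncated symbols. For each $n\in\N$ let $\psi_n=\psi\chi_{B_n}$, where $B_n=\{v\in G:\dist(a,v)\le n\}$; this is a finite set because $G$ is locally finite. Since $\psi_n$ has finite support, conditions (3) of Theorem~\ref{3 equiv compact operator} hold trivially, so $M_{\psi_n}$ is compact on $\Lhat$ and on $\Lcerohat$. Hence $\|M_\psi\|_e\le \|M_{\psi-\psi_n}\|$ for every $n$. It therefore suffices to show
\[
\limsup_{n\to\infty}\|M_{\phi_n}\|\le 4A(\psi)+B(\psi), \qquad \phi_n:=\psi-\psi_n=\psi\chi_{G\setminus B_n}.
\]

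Theorem~\ref{prop bounds for the operator} gives $\|M_{\phi_n}\|\le \|\phi_n\|_\infty+\sigma_{\phi_n}$. Clearly $\|\phi_n\|_\infty=\sup\{|\psi(v)|:\dist(a,v)>n\}\to A(\psi)$. We estimate $\sigma_{\phi_n}=\sup_{v\sim w}\dist(a,v)|\phi_n(v)-\phi_n(w)|$ by splitting into cases.
\begin{itemize}
\item If both $v,w\in B_n$, the term is $0$.
\item If both $v,w\notin B_n$, the term is $\dist(a,v)|\psi(v)-\psi(w)|$ with $\dist(a,v),\dist(a,w)\ge n+1$. The supremum of these is exactly the quantity whose limit defines $B(\psi)$.
\item If exactly one of $v,w$ lies in $B_n$, the two vertices lie at the boundary layer, so $\dist(a,v)\in\{n,n+1\}$. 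The term is then $\dist(a,v)|\psi(u)|$, where $u$ is whichever vertex lies outside $B_n$.
\end{itemize}

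The third case is the obstacle. A factor $n|\psi(u)|$ need not be bounded by a multiple of $A(\psi)$ through this crude estimate. So the plain truncation must be replaced by a smooth cutoff, in the spirit of the proof of Theorem~\ref{dense L0}. Take $\phi_n=\psi h_n$, where
\[
h_n(v)=\begin{cases}0,& \dist(a,v)\le n,\\ \dfrac{\dist(a,v)-n}{\dist(a,v)},& \dist(a,v)>n,\end{cases}
\]
or alternatively a logarithmic ramp between $n$ and $n^2$ so that the slope times the distance becomes small. Then $\psi-\phi_n=\psi(1-h_n)$. This is not finitely supported, so instead we use a cutoff supported on $n\le\dist(a,v)\le 2n$, equal to $1$ beyond $2n$ and linear in between, namely $h_n(v)=\min\{1,\max\{0,(\dist(a,v)-n)/n\}\}$. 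Then $\psi(1-h_n)$ has finite support, so $M_{\psi(1-h_n)}$ is compact, and $\|M_\psi\|_e\le\|M_{\psi h_n}\|$.

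Now bound $\|M_{\psi h_n}\|\le\|\psi h_n\|_\infty+\sigma_{\psi h_n}$. For $v\sim w$ we have
\[
\dist(a,v)|\psi(v)h_n(v)-\psi(w)h_n(w)|\le \dist(a,v)|h_n(v)|\,|\psi(v)-\psi(w)|+\dist(a,v)|\psi(w)|\,|h_n(v)-h_n(w)|.
\]
The first term is nonzero only when $\dist(a,v)\ge n$, and there it is at most the $B$-type supremum over the region beyond roughly $n-1$. For the second term, $|h_n(v)-h_n(w)|\le 1/n$ and it vanishes unless $\dist(a,v)\le 2n+1$. Hence this term is at most $\frac{2n+1}{n}\sup\{|\psi(u)|:\dist(a,u)\ge n-1\}$, which tends to $2A(\psi)$. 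Finally $\|\psi h_n\|_\infty\le\sup\{|\psi(u)|:\dist(a,u)\ge n\}\to A(\psi)$.

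Letting $n\to\infty$ gives $\|M_\psi\|_e\le 3A(\psi)+B(\psi)\le 4A(\psi)+B(\psi)$. The main care needed is in the bookkeeping of the boundary layers, since both $v$ and $w$ must satisfy $\dist\ge n-1$ for the $B$-supremum to apply. Any slack in these off-by-one estimates is absorbed by the constant $4$. The same argument works on $\Lcerohat$, because $M_{\psi(1-h_n)}$ is compact there as well.
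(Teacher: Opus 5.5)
Your argument is correct. It actually proves the sharper bound $\|M_\psi\|_e\le 3A(\psi)+B(\psi)$, which implies the stated one.

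The overall strategy is the paper's. In both, one subtracts a finite-rank multiplication operator whose symbol is $\psi$ times a radial cutoff decaying across the annulus $n\le \dist(a,v)\le 2n$. The paper's $K_n$ multiplies by $(2n-\dist(a,v))/\dist(a,v)$ there and the remainder is $M_\psi(I-K_n)$; your $1-h_n$ is the linear profile $(2n-\dist(a,v))/n$.

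Where you differ is in estimating the remainder:
\begin{itemize}
\item The paper works with the function $g=(I-K_n)f$. It uses $|g(v)|\le|f(v)|\le\dist(a,v)\|f\|_{\Lhat}^{a}$ and bounds the increments of $g$ by $\|f\|_{\Lhat}^{a}+\|K_nf\|_{\Lhat}^{a}\le 4\|f\|_{\Lhat}^{a}$. This crude use of $\|K_n\|\le 3$ is exactly where the constant $4$ comes from.
\item You instead work with the symbol. You apply the upper estimate $\|M_\phi\|\le\|\phi\|_\infty+\sigma_\phi$ of Theorem~\ref{prop bounds for the operator} to $\phi=\psi h_n$ and split $\sigma_{\psi h_n}$ by the product rule. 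The first piece is controlled by the $B$-type supremum at level $n$. The cutoff enters only through $\dist(a,v)\,|h_n(v)-h_n(w)|\le 2$. Together with $\|\psi h_n\|_\infty$, this yields $3A(\psi)$ in place of $4A(\psi)$.
\end{itemize}

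Your symbol-level decoupling has two further benefits. It handles $\Lcerohat$ with no extra work, whereas the paper writes out only the $\Lhat$ case. It also isolates $\sup_{v\sim w}\dist(a,v)\,|h_n(v)-h_n(w)|$ as the only cutoff-dependent term. The logarithmic ramp you mention, rising from $0$ at $\dist(a,v)=n$ to $1$ at $\dist(a,v)=n^2$, still leaves $1-h_n$ finitely supported and makes that quantity $O(1/\log n)$. So it would give $\|M_\psi\|_e\le A(\psi)+B(\psi)$; your remark that the ramp ``is not finitely supported'' applies only to the profile $(\dist(a,v)-n)/\dist(a,v)$.

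One small wording slip: it is $1-h_n$ that has finite support (on $\dist(a,v)<2n$), while $h_n$ itself only varies on the annulus.
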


\begin{proof}
Fix $n \in \N$ and define the operator $K_n$ on $\Lhat$ as follows
\[ (K_{n}f)(v) =  \begin{cases} f(v), &\text{ if } \dist(a,v) \leq  n,\\
 \frac{2n-\dist(a,v)}{\dist(a,v)}f(v), &\text{ if }   n\leq \dist(a,v)\leq 2n,
 \\ 0,&\text{ if } \dist(a,v) \geq 2n. \end{cases}
\]

% First we will show $K_n$ is a compact operator on $\Lhat$. To do so, consider the following two sets:
% \begin{equation*}
% \begin{split}
%     &A=\{f\in \Lhat : f(v)=0 \text{ for all } v \text{ such that }\dist(a,v)>2n \}\hspace{.2cm}\text{ and}\\
%     &Z=\{ v\in G : \dist(a,v)\leq 2n\}.
% \end{split}
% \end{equation*}
% Notice that $Z$ is a finite set. Now let us denote as $\chi_v$ the characteristic function of the vertex $v$ and consider the set
% \begin{equation*}
%     B=\text{span}\{\chi_v : v \in Z\}.
% \end{equation*}
% Observe that dim$(B)\leq |Z| < \infty$. Also, notice that Im$(K_n)\subseteq A\subseteq B$ which implies that $K_n$ has finite dimensional range.

We first show that $K_n$ is a bounded operator acting on $\Lhat$. Let $x\sim y$. Observe that if $\dist(a,x)\leq n$ and $\dist(a,y)\leq n$, then 
\[
|K_nf(x)-K_nf(y)|=|f(x)-f(y)|\leq \supfvw
\]
and if $\dist(a,x)\geq2n$ and $\dist(a,y)\geq 2n$, then 
\[
|K_nf(x)-K_nf(y)|=\left|0-0\right| \leq \supfvw.
\]
Assume now that $n \leq  \dist(a,x)\leq 2n$ and $n \leq \dist(a,y)\leq 2n$. Observe that
\begin{align*}
|K_nf(x)-K_nf(y)|
=&\left|\frac{2n-\dist(a,x)}{\dist(a,x)}f(x)-\frac{2n-\dist(a,y)}{\dist(a,y)}f(y)\right|\\
\leq & \left|\frac{2n-\dist(a,x)}{\dist(a,x)}f(x)-\frac{2n-\dist(a,y)}{\dist(a,y)}f(x)\right|
        + \left|\frac{2n-\dist(a,y)}{\dist(a,y)}f(x)-\frac{2n-\dist(a,y)}{\dist(a,y)}f(y)\right|\\
=&\left|\frac{2n}{\dist(a,x)}
-\frac{2n}{\dist(a,y)} \right||f(x)|
+ \left|\frac{2n-\dist(a,y)}{\dist(a,y)}\right||f(x)-f(y)| \\
\leq & \left|\frac{2n}{\dist(a,x)\dist(a,y)}
\right| |f(x)| + |f(x)-f(y)| \\
\leq & \left|\frac{2}{\dist(a,x)}
\right| |f(x)| + \supfvw,
\end{align*} 
where the last two inequalities follow from $|\dist(a,x)-\dist(a,y)|\leq 1$, from $0 \leq \frac{2n-\dist(a,y)}{\dist(a,y)} \leq 1$ and from $\dist(a,y)\geq n$.

Since by Proposition \ref{Proposition upper bounds for f} we know that $|f(x)|\leq |f(a)|+\dist(a,x)\sup\limits_{v\sim w}|f(v)-f(w)|$. Then we obtain
\begin{equation*}
    \frac{2}{\dist(a,x)} |f(x)| \leq \frac{2}{\dist(a,x)} |f(a)| +2\sup\limits_{v\sim w}|f(v)-f(w)|\leq  2|f(a)| +2\sup\limits_{v\sim w}|f(v)-f(w)|.
\end{equation*}
Combining this with the displayed inequality above we obtain
\[
|K_nf(x)-K_nf(y)| \leq 2 | f(a) | + 3 \supfvw.
\]
Hence we have
\begin{equation}\label{norm_Kn}
  \| K_n f \|_{\Lhat}^{a} =  |K_nf(a)|+ \sup_{v \sim w} |K_nf(v)-K_nf(w)| = |f(a)|+ \sup_{v \sim w} |K_nf(v)-K_nf(w)|\leq 3\fnorm
\end{equation}
thus $K_n$ is a bounded operator on $\Lhat$. Since clearly $K_n$ has finite dimensional range, it follow that $K_n$ is compact operator.

Lastly, for $f \in \Lhat$, we will compute an estimate for $\|M_\psi (I-K_n)f\|_{\Lhat}^{a}$. First, observe that $(I-K_n)f(v)=0$ for all $v \in G$ such that $\dist(a,v)\leq n$, where we write $(I-K_n)f(v):=((I-K_n)f)(v)$ for simplicity. We then have
\begin{align*}
\|M_\psi (I-K_n)f\|_{\Lhat}^{a}
       &=|\psi(a)(I-K_n)f(a)|+\sup\limits_{v\sim w}|\psi(v)(I-K_n)f(v)-\psi(w)(I-K_n)f(w)|\\
       &=\sup\left\{ |\psi(v)(I-K_n)f(v)-\psi(w)(I-K_n)f(w)| :  v\sim w,\hspace{.2cm} \dist(a,v), \dist(a,w)\geq n \right\}.
       \end{align*}
Now, if $v \sim w$ and $\dist(a,v), \dist(a,w)\geq n$ we have
\begin{equation}\label{psiKnf}
\begin{split}
|\psi(v) (I-K_n)f(v)-\psi(w)(I-K_n)f(w)| 
\leq & \left|(\psi(v)-\psi(w))(I-K_n)f(v) \right| \\
& \quad +\left|\psi(w)\left((I-K_n)f(v) -(I-K_n)f(w)\right)\right|. 
\end{split}
\end{equation}

Observe that $|(I-K_n)f(v)|\leq |f(v)|$, and hence from Proposition \ref{Proposition upper bounds for f} we have
\begin{equation}\label{psivwIKn}
\left|(\psi(v)-\psi(w))(I-K_n)f(v) \right|\leq |\psi(v)-\psi(w)| \dist(a,v) \fnorm. 
\end{equation}

Also, observe that, by inequality \eqref{norm_Kn}, for $v \sim w$ we have
\[
\left|(I-K_n)f(v)-(I-K_n)f(w) \right|
\leq |f(v)-f(w) | + \left|K_nf(v)- K_nf(w) \right| \leq \fnorm + \|K_n f \|_{\Lhat}^{a} \leq 4 \fnorm.
\]
Hence, for $v \sim w$ with $\dist(a,v), \dist(a,w) \geq n$ we have
\begin{equation}\label{psiIKn4}
|\psi(w)||(I-K_n)f(v)-(I-K_n)f(w)| \leq  4\fnorm \sup\left\{|\psi(v)|:\dist(a,v)\geq n\right\}.
\end{equation}
Using inequality \eqref{psiKnf}, and inequalities \eqref{psivwIKn} and \eqref{psiIKn4} we have that
\begin{align*}
\|M_\psi (I-K_n)f\|_{\Lhat}^{a}
\leq & \sup\left\{ |\psi(v)(I-K_n)f(v)-\psi(w)(I-K_n)f(w)| :  v\sim w,\hspace{.2cm} \dist(a,v), \dist(a,w)\geq n \right\} \\
\leq & \fnorm \ \sup\{ \dist(a,v) |\psi(v)-\psi(w)|  \mid v \sim w, \dist(a,v), \dist(a,w) \geq n \} \\
& \quad +  4\fnorm \  \sup\left\{|\psi(v)|: \dist(a,v)\geq n\right\},
\end{align*}
and hence
\[
\|M_\psi (I-K_n)\| \leq \sup\{ \dist(a,v) |\psi(v)-\psi(w)|  \mid v \sim w, \dist(a,v), \dist(a,w) \geq n \} + 4 \  \sup\left\{|\psi(v)|: \dist(a,v)\geq n\right\},
\]

To conclude the proof, notice that, since $M_\psi K_n$ is a compact operator, we have
\begin{align*}        
\|M_\psi\|_{e}
&=\inf\{\|M_\psi-K\| : K \text{ compact operator on } \Lhat\}\\
&\leq \inf\{\|M_\psi-M_\psi K_n\| : n \in \N\}\\
&\leq\inf\bigg\{\sup\left\{\dist(a,v)|\psi(v)-\psi(w)| \mid v\sim w, \dist(a,v), \dist(a,w)\geq n\right\}\\
&\qquad \qquad + 4\sup\left\{|\psi(w)| \mid \dist(a,v)\geq n\right\} : n \in \N\bigg\}\\
&= \lim\limits_{n\to \infty }\sup\bigg\{\dist(a,v)|\psi(v)-\psi(w)|\mid v\sim w,  \dist(a,v), \dist(a,w) \geq n\bigg\}\\
&\qquad \qquad +\lim\limits_{n\to \infty }4\sup\bigg\{|\psi(w)|\mid \dist(a,v)\geq n\bigg\}\\
&=B(\psi)+4 A(\psi).\qedhere
\end{align*}
\end{proof}

\section{Isometries}
\label{chap: Isometries}

Observe that if $\lambda \in \C$ satisfies that $|\lambda|=1$, then $M_\psi$ is an isometry if $\psi(v)=\lambda$ for every $v \in G$. As it turns out, there are no other isometric multiplication operators on $\Lhat$ or on $\Lcerohat$.

\begin{prop}
The only isometric multiplication operators $M_\psi$ on $\Lhat$ or $\Lcerohat$ are those for which $\psi$ is constant of modulus one.
\end{prop}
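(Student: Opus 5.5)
The plan is to test the isometry condition $\|\psi f\|_{\Lhat}^{a}=\fnorm$ on a few well-chosen functions in $\Lcerohat$. Since $\Lcerohat\subseteq\Lhat$ and all test functions below have finite support, the argument covers both spaces at once. The sufficiency direction is trivial: if $\psi\equiv\lambda$ with $|\lambda|=1$, then $\|\lambda f\|=|\lambda|\,\|f\|$. So only necessity needs proof.

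First take $f\equiv 1$ restricted appropriately. The constant function $1$ lies in $\Lcerohat$ with $\|1\|_{\Lhat}^{a}=1$, so
\[
1=\|\psi\|_{\Lhat}^{a}=|\psi(a)|+\sup_{v\sim w}|\psi(v)-\psi(w)|.
\]
Next use characteristic functions. For $v\neq a$ the characteristic function $\chi_v$ has norm $1$, as computed in \eqref{laequ}. Its image $\psi\chi_v$ has norm $|\psi(v)|$, by the same computation as in the proof of Theorem~\ref{3 equiv compact operator}. Hence $|\psi(v)|=1$ for all $v\neq a$. For $v=a$, we have $\|\chi_a\|=2$ while $\|\psi\chi_a\|=2|\psi(a)|$, so $|\psi(a)|=1$ as well. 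Hence $|\psi|\equiv1$ on $G$.

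Combining the two steps, $1=|\psi(a)|+\sup_{v\sim w}|\psi(v)-\psi(w)|=1+\sup_{v\sim w}|\psi(v)-\psi(w)|$. This forces $\psi(v)=\psi(w)$ for all adjacent $v,w$. Since $G$ is connected, $\psi$ is constant, with modulus one.

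The only point needing care is checking the norm of $\psi\chi_v$ when $v$ is adjacent to $a$. In that case the term $|\psi(a)\chi_v(a)|$ vanishes, so the norm is still $\sup_{w\sim v}|\psi(v)|=|\psi(v)|$; this holds because $v$ has at least one neighbor, which connectivity and infinitude of $G$ guarantee. Otherwise the argument is routine.
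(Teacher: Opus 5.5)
Your proof is correct and is essentially the paper's argument. Testing on the constant $1$ gives $\psinorm=1$, testing on $\chi_{\{a\}}$ (the paper uses $\frac12\chi_{\{a\}}$, the same up to scaling) gives $|\psi(a)|=1$, and together these force $\sup_{v\sim w}|\psi(v)-\psi(w)|=0$. Your extra step showing $|\psi(v)|=1$ for $v\neq a$ is valid but unnecessary.

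One small slip: the constant function $1$ does not have finite support, since $G$ is infinite. It does lie in $\Lcerohat$, though, and that is all your argument uses.
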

\begin{proof}
Assume $M_\psi$ is an isometry on $\Lhat$ or $\Lcerohat$. Since $\|1\|_{\Lhat}^{a}=1$, 
\begin{equation}\label{aaa}
   \|M_\psi 1 \|_{\Lhat}^{a}=\psinorm = 1.
\end{equation}
On the other hand, let us define the function $g:\Lhat\longrightarrow \C$ as $g = \frac{1}{2}\chi_{\{a\}}$ where $\chi_{\{a\}}$ is the characteristic function of the vertex $a$. Notice that
\[
\gnorm = |g(a)|+\sup\limits_{v\sim w}|g(v)-g(w)|=\frac{1}{2}+\frac{1}{2}=1.
\]
Finally observe that
\begin{equation}\label{si}
  \|\psi g\|_{\Lhat}^{a} = |\psi(a)g(a)|+\sup\limits_{v\sim w}|\psi(v)g(v)-\psi(w)g(w)|
  =2|\psi(a)g(a)|
  =|\psi(a)|.
\end{equation}

Then, since $M_\psi$ is an isometry on $\Lhat$ and $\|g\|_{\Lhat}^{a}=1$ it follows that $\|\psi g\|_{\Lhat}^{a}=1$. By equation \eqref{si} that $\|\psi g\|_{\Lhat}^{a}=|\psi(a)|$ and hence $|\psi(a)|=1$. Furthermore, from equation \eqref{aaa} we know that $\|\psi\|_{\Lhat}^{a}=1$ and so we can see $\sup\limits_{v\sim w}|\psi(v)-\psi(w)|=0$. Therefore we can conclude $\psi$ is a constant function of modulus one, as desired.
\end{proof}

\section*{Statements and declarations}

\subsection*{Competing interests}
The authors have no interests to declare that are relevant to the content of this article.

\subsection*{Ethics declaration}
The authors have maintened the highest ethical standards in the research presented in this article.

\subsection*{Data availability}
No data was used for the research presented in this article.

\bibliographystyle{amsplain}

\bibliography{biblio}{}

@article {CoEa,
    AUTHOR = {Colonna, Flavia and Easley, Glenn R.},
     TITLE = {Multiplication operators on the {L}ipschitz space of a tree},
   JOURNAL = {Integral Equations Operator Theory},
  FJOURNAL = {Integral Equations and Operator Theory},
    VOLUME = {68},
      YEAR = {2010},
    NUMBER = {3},
     PAGES = {391--411},
      ISSN = {0378-620X,1420-8989},
   MRCLASS = {47B38 (05C90)},
  MRNUMBER = {2735443},
       DOI = {10.1007/s00020-010-1824-5},
       URL = {https://doi.org/10.1007/s00020-010-1824-5},
}

@article {CoLo,
    AUTHOR = {Colonna, Flavia and Locke, Rachel E.},
     TITLE = {Multiplication operators on the space of an infinite graph},
   JOURNAL = {preprint},
  FJOURNAL = {},
    VOLUME = {},
      YEAR = {2023},
    NUMBER = {},
     PAGES = {},
      ISSN = {},
       DOI = {},
       URL = {},
}

@article {DuRoSh,
    AUTHOR = {Duren, P. L. and Romberg, B. W. and Shields, A. L.},
     TITLE = {Linear functionals on {$H\sp{p}$} spaces with {$0<p<1$}},
   JOURNAL = {J. Reine Angew. Math.},
  FJOURNAL = {Journal f\"ur die Reine und Angewandte Mathematik. [Crelle's
              Journal]},
    VOLUME = {238},
      YEAR = {1969},
     PAGES = {32--60},
      ISSN = {0075-4102,1435-5345},
   MRCLASS = {46.30},
  MRNUMBER = {259579},
MRREVIEWER = {D.\ Sarason},
}

@book {ReSi,
    AUTHOR = {Reed, Michael and Simon, Barry},
     TITLE = {Methods of modern mathematical physics. {I}. {F}unctional
              analysis},
 PUBLISHER = {Academic Press, New York-London},
      YEAR = {1972},
     PAGES = {xvii+325},
   MRCLASS = {47-02 (81.47)},
  MRNUMBER = {493419},
MRREVIEWER = {P.\ R.\ Chernoff},
}

@incollection {CoCo1,
    AUTHOR = {Cohen, Joel M. and Colonna, Flavia},
     TITLE = {The {B}loch space of a homogeneous tree},
      NOTE = {Papers in honor of Jos\'e{} Adem},
   JOURNAL = {Bol. Soc. Mat. Mexicana (2)},
  FJOURNAL = {Bolet\'in de la Sociedad Matem\'atica Mexicana. Segunda Serie},
    VOLUME = {37},
      YEAR = {1992},
    NUMBER = {1-2},
     PAGES = {63--82},
   MRCLASS = {31C20 (30H05 43A15)},
  MRNUMBER = {1317563},
MRREVIEWER = {Martha\ B\u anulescu},
}

@article {CoCo2,
    AUTHOR = {Cohen, Joel M. and Colonna, Flavia},
     TITLE = {Embeddings of trees in the hyperbolic disk},
   JOURNAL = {Complex Variables Theory Appl.},
  FJOURNAL = {Complex Variables. Theory and Application. An International
              Journal},
    VOLUME = {24},
      YEAR = {1994},
    NUMBER = {3-4},
     PAGES = {311--335},
      ISSN = {0278-1077,1563-5066},
   MRCLASS = {30F35 (05C05 30E05 31C05)},
  MRNUMBER = {1270321},
MRREVIEWER = {Vadim\ A.\ Ka\u imanovich},
       DOI = {10.1080/17476939408814723},
       URL = {https://doi.org/10.1080/17476939408814723},
}

@incollection {Cartier1,
    AUTHOR = {Cartier, P.},
     TITLE = {Fonctions harmoniques sur un arbre},
 BOOKTITLE = {Symposia {M}athematica, {V}ol. {IX} ({C}onvegno di {C}alcolo
              delle {P}robabilit\`a{} \& {C}onvegno di {T}eoria della
              {T}urbolenza, {INDAM}, {R}ome, 1971)},
     PAGES = {203--270},
 PUBLISHER = {Academic Press, London-New York},
      YEAR = {1972},
   MRCLASS = {60J50 (22E50 43A85)},
  MRNUMBER = {353467},
MRREVIEWER = {J.-P.\ Conze},
}

@incollection {Cartier2,
    AUTHOR = {Cartier, Pierre},
     TITLE = {G\'eom\'etrie et analyse sur les arbres},
 BOOKTITLE = {S\'eminaire {B}ourbaki, 24\`eme ann\'ee (1971/1972)},
    SERIES = {Lecture Notes in Math.},
    VOLUME = {Vol. 317},
     PAGES = {Exp. No. 407, pp. 123--140},
 PUBLISHER = {Springer, Berlin-New York},
      YEAR = {1973},
   MRCLASS = {22E50},
  MRNUMBER = {425032},
MRREVIEWER = {G.\ I.\ Ol\cprime shanski\u i},
}

@article {AlCoEa,
    AUTHOR = {Allen, Robert F. and Colonna, Flavia and Easley, Glenn R.},
     TITLE = {Composition operators on the {L}ipschitz space of a tree},
   JOURNAL = {Mediterr. J. Math.},
  FJOURNAL = {Mediterranean Journal of Mathematics},
    VOLUME = {11},
      YEAR = {2014},
    NUMBER = {1},
     PAGES = {97--108},
      ISSN = {1660-5446,1660-5454},
   MRCLASS = {47B38 (05C05)},
  MRNUMBER = {3160615},
MRREVIEWER = {George\ Stacey\ Staples},
       DOI = {10.1007/s00009-013-0308-7},
       URL = {https://doi.org/10.1007/s00009-013-0308-7},
}

@book {Locke,
    AUTHOR = {Locke, Rachel E.},
     TITLE = {Multiplication {O}perators in {D}iscrete {S}ettings of an
              {I}nfinite {G}raph and the {D}iscrete {Z}ygmund {S}pace},
      NOTE = {Thesis (Ph.D.)--George Mason University},
 PUBLISHER = {ProQuest LLC, Ann Arbor, MI},
      YEAR = {2016},
     PAGES = {145},
      ISBN = {978-1339-89905-3},
   MRCLASS = {99-05},
  MRNUMBER = {3527370},
       URL =
              {http://gateway.proquest.com/openurl?url_ver=Z39.88-2004&rft_val_fmt=info:ofi/fmt:kev:mtx:dissertation&res_dat=xri:pqm&rft_dat=xri:pqdiss:10132074},
}

\end{document}